\documentclass[12pt]{amsart}

\usepackage[left=3cm,top=2.5cm,bottom=2.5cm,right=3cm]{geometry}
\usepackage{times}
\usepackage{amssymb}
\usepackage{graphicx,xspace}
\usepackage{epsfig}
\usepackage{enumitem}
\usepackage[usenames,dvipsnames]{xcolor}
\usepackage{tikz}
\usepackage{gensymb}
\usepackage[T1]{fontenc}
\usepackage[utf8]{inputenc}
\usepackage{bm}
\usepackage{genyoungtabtikz}
\usepackage[config, labelfont={normalsize}]{caption,subfig}
\captionsetup[subfigure]{subrefformat=simple,labelformat=simple,listofformat=subsimple}

\usepackage{mathrsfs}
\definecolor{green}{RGB}{0,127,0}
\definecolor{red}{RGB}{191,0,0}
\usepackage[colorlinks,cite color=red,link color=green,pagebackref=true]{hyperref}
\usepackage{todonotes}

\usetikzlibrary{matrix,arrows,calc}
\usetikzlibrary{decorations.markings}
\usetikzlibrary{patterns}
\usetikzlibrary{snakes}
\usetikzlibrary{shapes}

\usepackage[capitalize]{cleveref}

\theoremstyle{plain}

\newtheorem{lemma}{Lemma}[section]
\newtheorem{theorem}[lemma]{Theorem}
\newtheorem{corollary}[lemma]{Corollary}
\newtheorem{proposition}[lemma]{Proposition}

\newtheorem{conjecture}[lemma]{Conjecture}

\theoremstyle{remark}
\newtheorem*{remark}{Remark}

\newtheorem{definition}[lemma]{Definition}

\newcommand{\G}{\mathcal{G}}
\newcommand{\A}{\mathcal{A}}

\newcommand{\N}{\mathbb{N}}

\newcommand{\Sym}[1]{\mathfrak{S}_{#1}}

\newcommand{\I}{\mathcal{I}}

\newcommand{\Z}{\mathbb{Z}}

\newcommand{\QQ}{\mathbb{Q}}

\newcommand{\XX}{\bm{X}}
\newcommand{\xx}{\bm{x}}
\newcommand{\yy}{\bm{y}}

\newcommand{\PPP}{\mathcal{P}}

\DeclareMathOperator{\GL}{GL}
\def\la{\lambda}
\def\ka{\kappa}

\def\si{\sigma}

\DeclareMathOperator{\Tu}{Tutte}
\DeclareMathOperator{\Hilb}{Hilb}
\DeclareMathOperator{\Fr}{Fr}

\newcommand{\zz}{\bm{z}}

\DeclareMathOperator{\ia}{ia}
\DeclareMathOperator{\ea}{ea}
\DeclareMathOperator{\inv}{inv}
\DeclareMathOperator{\Inv}{Inv}
\DeclareMathOperator{\InvP}{InvT}

\DeclareMathOperator{\maj}{maj}

\DeclareMathOperator{\Des}{Des}

\DeclareMathOperator{\wt}{wt}

\DeclareMathOperator{\iDes}{iDes}
\DeclareMathOperator{\outdeg}{outdeg}
\DeclareMathOperator{\level}{level}

\def\XX{\bm{X}}
\def\YY{\bm{Y}}



\author[M.~Dołęga]{Maciej Dołęga}
\address{
Wydział Matematyki i Informatyki, 
Uniwersytet im.~Adama Mickiewicza, 
Collegium Mathematicum,
Umultowska 87, 
61-614 Poznań, 
Poland, \newline \indent Instytut Matematyczny,
Uniwersytet Wrocławski,  \mbox{pl.\ Grunwaldzki~2/4,} 50-384
Wrocław, Poland}
\email{maciej.dolega@amu.edu.pl}

%
%
 \thanks{
MD is supported from {\it Narodowe Centrum Nauki}, grant UMO-2015/16/S/ST1/00420.}

\keywords{Macdonald polynomials; Schur polynomials; Cumulants; Tutte polynomials; Parking
  functions; $q,t$-Kostka numbers}

\subjclass[2010]{Primary 05E05; Secondary 05A30, 05C05, 05C31}

\title[Macdonald
  cumulants, $G$-inversion polynomials and $G$-parking functions]{Macdonald
  cumulants, $G$-inversion polynomials and $G$-parking functions}

\begin{document}

\maketitle

\begin{abstract}
We prove a combinatorial formula for Macdonald cumulants which
generalizes the celebrated formula of Haglund, Haiman and Loehr for Macdonald
polynomials. We provide several applications of our formula. Firstly,
it allows us to give a new, constructive
proof of a strong factorization property
of Macdonald polynomials proven recently by the author of this
paper. Moreover, we prove that Macdonald cumulants
are $q,t$--positive in the monomial and in the fundamental
quasisymmetric bases. Furthermore, we use our formula to prove the recent higher-order
Macdonald positivity conjecture for the coefficients
of the Schur polynomials indexed by hooks. Our combinatorial formula
relates Macdonald cumulants to the generating function of $G$-parking
functions, or equivalently to a certain specialization of the Tutte
polynomials.
\end{abstract}

\section{Introduction}
\label{sec:introduction}

\subsection{Schur--positivity and Macdonald polynomials}
\label{subsec:SchurPositivity}

An ubiquitous problem in the theory of symmetric functions is to
express a given symmetric function $f$ in a given basis $\{s_\lambda\}_\lambda$ of particular
interest. In other words, one can ask:
\emph{what can we say about the coefficients $a_\lambda$ in the following expansion:}
\[ f = \sum_{\lambda}a_\lambda\ s_\lambda?\] 
Most often, such a basis is the basis of Schur symmetric
functions, which turns out to be the most natural in many different contexts such as the representation
theory of the symmetric groups, algebraic geometry, or random discrete
models, among others. Indeed, Schur symmetric
functions are characters of irreducible representations of $\GL_n$,
their asymptotic behaviour describe many random processes and they
represent Schubert classes in Grassmannian varieties, see \cite{Fulton1997,BufetovGorin2016}. In many cases it was observed that a beautiful but
notoriously difficult to prove phenomenon occurs: all the coefficients
$a_\lambda$ are \emph{nonnegative integers}, or in a more general
setting they are \emph{polynomials with nonnegative integer
  coefficients}. For instance, the homogeneous symmetric function $h_\mu$
which has positive integer coefficients in the Schur basis expansion has a natural
$q$--deformation given by the modified Hall-Littlewood
symmetric function $Q'_\mu(q)$. The coefficients of $Q'_\mu(q)$ expanded in
the Schur basis are polynomials in $q$ with nonnegative integer
coefficients. This phenomenon, called \emph{Schur--positivity}, has a deep geometric reason and a beautiful combinatorial
interpretation in the
case of Hall--Littlewood symmetric
functions \cite{LascouxSchutzenberger1978,Lusztig1981}. Moreover, in
the case of other symmetric functions Schur--positivity builds deep connections between many different areas of
mathematics and physics such as the representation theory of groups,
Hecke algebras, algebraic geometry, or the theory of quantum groups,
see for instance \cite{LascouxLeclercThibon1996,KnutsonTao1999,Haiman2001,LamPostnikovPylyavskyy2007,ShareshianWachs2012}. Therefore, deciding whether
a given symmetric function is Schur--positive is one of the major questions in
the contemporary algebraic combinatorics of symmetric functions.

\vspace{10pt}

One of the most prominent examples of Schur--positive
symmetric functions which contain the aforementioned modified Hall--Littlewood symmetric functions as a special case 
is the Macdonald symmetric function $\tilde{H}_\mu(\xx;q,t)$,
introduced by Macdonald in 1988 \cite{Macdonald1988,
  Macdonald1995} (here, we use ``the transformed form'' of Macdonald
polynomials sometimes called ``the modified form'' introduced by
Garsia and Haiman - see \cite{GarsiaHaiman1993} for its initial definition and a relation with other
forms of Macdonald polynomials). Strictly from the definition, this is a symmetric function
in variables $\xx := x_1,x_2,\dots$ with the coefficients being rational functions in $q,t$.
However, Macdonald conjectured \cite{Macdonald1988} that expanding it in the Schur basis:
\[ \tilde{H}_\mu(\bm{x};q,t) =
  \sum_{\lambda}\tilde{K}_{\lambda,\mu}(q,t)\ s_\la(\xx)\]
the coefficients $\tilde{K}_{\lambda,\mu}(q,t)$ (called \emph{transformed
  $q,t$--Kostka coefficients}) are in fact polynomials in $q,t$ with
nonnegative coefficients. In the following Garsia and Haiman gave a
conjectural representation--theoretic interpretation of the transformed
$q,t$-Kostka coefficients \cite{GarsiaHaiman1993}, and it took almost
ten years more to Haiman to prove it \cite{Haiman2001}. He achieved
this goal
by connecting a representation theoretic interpretation of the transformed
$q,t$-Kostka coefficients with the problem from algebraic geometry of the Hilbert
scheme of $n$ points in the plane. This result is considered as a
great breakthrough in the symmetric functions theory and initiated
very active research in the remarkable algebraic combinatorics of the
Macdonald polynomials, see the expository textbook of Haglund \cite{Haglund2008}.


Macdonald positivity ex-conjecture has seen many generalizations in
different directions up to
these days. One example of such a generalization called \emph{higher--order Macdonald
  positivity conjecture} was presented
in our recent work \cite{Dolega2017} and will be the main subject of
this paper.

\subsection{Cumulants and higher--order Macdonald
  positivity conjecture}
\label{subsec:HigherOrder}

\subsubsection{Cumulants}
A classical problem in the
symmetric functions theory, which is related to the
positivity problem from \cref{subsec:SchurPositivity}, is to understand the so-called \emph{structure
  constants} $a_{\mu,\nu}^\la $ of a given linear basis $\{s_\mu\}_\mu$:
\[ s_\mu\cdot s_\nu = \sum_{\la} a_{\mu,\nu}^\la\ s_\la.\]
Let us look on the structure constants for Macdonald
polynomials. For partitions $\lambda = (\lambda_1,\lambda_2,\dots)$ and $\mu =
(\mu_1,\mu_2,\dots)$ we define a new partition
$\la\oplus\mu := (\la_1+\mu_1,\la_2+\mu_2,\dots)$ by adding
coordinates of partitions $\la$ and $\mu$.
Since Macdonald polynomials $\{\tilde{H}_\mu\}_\mu$ form a linear
basis of the algebra $\Lambda$ of symmetric functions over $\QQ(q,t)$,
we can define a multiplication $\oplus$ on $\Lambda$ by setting $\tilde{H}_\mu
\oplus \tilde{H}_\nu := \tilde{H}_{\mu\oplus\nu}$ and extending it by
linearity. Macdonald showed \cite{Macdonald1995} that
algebras $(\Lambda,\oplus)$ and $(\Lambda,\cdot)$ coincide in the
specialization $q=1$  (this also follows from Haglund, Haiman and Loehr's formula \eqref{eq:Haglund}).
Thus, the much simpler algebraic structure $(\Lambda,\oplus)$ can be
interpreted as an approximation of the algebra $(\Lambda,\cdot)$ of
interest, as $q \to 1$, therefore it is desirable to understand the
difference between these two product structures. A natural way
of measuring the discrepancy between two algebraic structures is
provided by \emph{conditional cumulants}.

\vspace{10pt}

Let $\A$ be a commutative ring with two different multiplicative
structures $\cdot$
and $\oplus$ which define two (different) algebra structures
on $\A$. For any $X_1,\dots,X_r \in \A$ one can define a conditional
cumulant $\ka(X_1,\dots,X_r) \in \A$ as the coefficient of $t_1\cdots t_r$ in the following formal power
series in $t_1,\dots,t_k$:
\begin{equation} 
\label{eq:cumu1}
\ka(X_1,\dots,X_r) := [t_1\cdots t_r] \log_{\cdot} \left(
  \exp_{\oplus}(t_1X_1+\cdots t_rX_r)\right),
\end{equation}
where $\log_{\cdot}$ and $\exp_{\oplus}$ are defined in a standard way
with respect to multiplication given by $\cdot$ and $\oplus$
respectively. Thus
\[ \log_{\cdot}(1+A) = \sum_{n \geq 1}\frac{(-1)^{n-1}A^{\cdot
      n}}{n},\]
and 
\[ \exp_{\oplus}(A) = \sum_{n \geq 0}\frac{A^{\oplus n}}{n!}.\] 
Definition \eqref{eq:cumu1} can be transformed into an equivalent
but more
combinatorial definition:
\begin{equation} 
\label{eq:DefCumulants}
\ka(X_1,\dots,X_r) = \sum_{\substack{\pi \in \PPP([r]) } } 
            (-1)^{\#\pi-1}(\#\pi-1)! \prod_{B \in \pi} \bigoplus_{b \in B}X_b.
\end{equation}
Here we sum over \emph{set-partitions} of $[r] := \{1,2,\dots,r\}$, that is all possible
sets $\pi$ of
nonempty subsets of $[r]$ such that every element $i \in [r]$ belongs
to precisely one element of $\pi$ (i.~e., $[r]$ is a disjoint union
of the elements in $\pi$); $\#\pi$ denotes the number of elements
of $\pi$.
It is worth mentioning that the M\"obius inversion formula asserts
that \eqref{eq:DefCumulants} has an equivalent form:
        \begin{equation}
\label{eq:DefMoments}
            \bigoplus_{j \in J}X_j = \sum_{\substack{\pi \in \PPP(J) } } 
            \prod_{B \in \pi} \ka(X_i: i \in B).
    \end{equation}

Note that cumulants are multilinear. Thus, in order to understand
the discrepancy between $(\Lambda,\oplus)$ and $(\Lambda,\cdot)$ it is
enough to study cumulants $\ka(\tilde{H}_{\la_1},\dots,
\tilde{H}_{\la_r})$ of the basic elements.  It is clear that $\ka(\tilde{H}_{\la^1},\dots,
\tilde{H}_{\la^r}) \in \Z[q,t]\{m_\mu\}_\mu$ because the cumulant $\ka(\tilde{H}_{\la^1},\dots,
\tilde{H}_{\la^r})$ is a linear combination of products of Macdonald
polynomials. It is also clear that $\ka(\tilde{H}_{\la^1},\dots,
\tilde{H}_{\la^r}) \in \Z[q,t]\{(q-1)m_\mu\}_\mu$ for $r
>1$ since $(\Lambda,\oplus)$ and
$(\Lambda,\cdot)$ coincide in the specialization $q=1$, but one check that $\ka(\tilde{H}_{\la^1}, \tilde{H}_{\la^2},
\tilde{H}_{\la^3}) \in \Z[q,t]\{(q-1)^2m_\mu\}_\mu$, which is quite
nontrivial. We can observe a pattern here -- it is reasonable to think that ``higher--order'' cumulants provide
the approximation of higher order when $q \to 1$, that is $\ka(\tilde{H}_{\la^1},\dots,
\tilde{H}_{\la^r}) \in \Z[q,t]\{(q-1)^{r-1}m_\mu\}_\mu$. This statement was conjectured in
\cite{DolegaFeray2016}, and this implies the partial solution of the
$b$--conjecture, see \cref{subsub:motivations}. The proof was found recently
by the author:
\begin{theorem}{\cite{Dolega2017}}
\label{theo:podzielnosc}
For any partitions $\lambda^1,\dots,\lambda^r$ one has
\[ \ka(\tilde{H}_{\la^1},\dots,
\tilde{H}_{\la^r}) \in \Z[q,t]\{(q-1)^{r-1}m_\mu\}_\mu.\]
\end{theorem}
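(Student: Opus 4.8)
\smallskip
\noindent\textbf{Plan of the proof.}
The plan is to deduce the statement from Haglund's formula~\eqref{eq:Haglund}, $\tilde H_\mu(\xx;q,t)=\sum_{\sigma}q^{\inv(\sigma)}t^{\maj(\sigma)}\xx^{\sigma}$ (the sum over fillings $\sigma$ of the Young diagram of $\mu$), together with one elementary identity in the partition lattice. Two features of that formula will be used throughout. First, $\maj(\sigma)$ is a sum over the descent cells of $\sigma$ of a quantity that is local to the column of the cell; since the multiset of column heights of $\mu\oplus\nu$ is the union of those of $\mu$ and of $\nu$, the statistic $\maj$ is \emph{insensitive} to the way two shapes are interleaved when one forms $\mu\oplus\nu$. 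Second, $\inv(\sigma)$ is a sum over inversion triples, each of which occupies cells in exactly two columns, so $\inv$ decomposes as a sum of pairwise column--column interactions. At $q=1$, where $\inv$ disappears and a filling of $\mu$ is simply an independent choice of one filling per column, these two observations already give the factorization $\tilde H_\mu(\xx;1,t)\,\tilde H_\nu(\xx;1,t)=\tilde H_{\mu\oplus\nu}(\xx;1,t)$ quoted above, via $\tilde H_\mu(\xx;1,t)=\prod_{j}\tilde H_{(1^{\mu'_j})}(\xx;t)$.

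For general $q$, fix $\la^1,\dots,\la^r$, put $\nu:=\la^1\oplus\cdots\oplus\la^r$, and for $\pi\in\PPP([r])$ and $B\in\pi$ set $\mu_B:=\bigoplus_{b\in B}\la^b$, so that \eqref{eq:DefCumulants} reads $\ka(\tilde H_{\la^1},\dots,\tilde H_{\la^r})=\sum_{\pi\in\PPP([r])}(-1)^{\#\pi-1}(\#\pi-1)!\prod_{B\in\pi}\tilde H_{\mu_B}$. The main step --- and essentially the content of the Haglund-type formula for cumulants advertised in the abstract --- is to realize \emph{all} of these products over one $\pi$-independent family of objects, the fillings $\sigma$ of $\nu$ (equivalently, tuples of column fillings, one for each column of each $\la^b$):
\[ \prod_{B\in\pi}\tilde H_{\mu_B}(\xx;q,t)=\sum_{\sigma}q^{\inv_\pi(\sigma)}\,t^{\maj(\sigma)}\,\xx^{\sigma}, \]
where the $t^{\maj}\xx^{(\cdot)}$--weight is the one appearing in Haglund's formula for $\tilde H_\nu$, and $\inv_\pi(\sigma)=\rho(\sigma)+\sum c_{b,b'}(\sigma)$: here $\rho(\sigma)\ge0$ counts the inversion triples whose two columns belong to the same layer, $c_{b,b'}(\sigma)\ge0$ counts the inversion triples straddling layers $b$ and $b'$, and the sum runs over the unordered pairs $\{b,b'\}$ lying in a common block of $\pi$. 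Taking $\pi=\{[r]\}$ recovers Haglund's formula for $\tilde H_\nu$, taking $\pi$ to be the all-singleton partition recovers $\prod_b\tilde H_{\la^b}$, and the $q=1$ specialization recovers the first paragraph. Establishing this identity is where all the difficulty lies: one has to interleave the $r$ shapes inside $\nu$, check that $\maj$ really is unchanged, and --- the delicate point --- verify that the inversion count of a pair of columns depends only on that pair (and its left--right order), so that the inversion statistic reorganizes correctly into ``same-layer'' and ``pair-of-layers'' contributions.

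Granting the identity, the rest is formal. Substituting it into the cumulant and exchanging the two summations, the factor $q^{\rho(\sigma)}t^{\maj(\sigma)}\xx^{\sigma}$ is $\pi$-independent and comes out, and one is left, for each filling $\sigma$ of $\nu$, with the scalar
\[ S_\sigma=\sum_{\pi\in\PPP([r])}(-1)^{\#\pi-1}(\#\pi-1)!\prod_{\{b,b'\}\text{ grouped by }\pi}q^{c_{b,b'}(\sigma)}. \]
Writing $q^{c_{b,b'}(\sigma)}=1+z_{b,b'}$ with $z_{b,b'}:=q^{c_{b,b'}(\sigma)}-1$, expanding the products, and collecting the terms according to the graph $F$ on the vertex set $[r]$ formed by the chosen factors $z_{b,b'}$, the coefficient of $\prod_{e\in F}z_e$ is $\sum_{\pi\ge\pi_F}(-1)^{\#\pi-1}(\#\pi-1)!$, where $\pi_F$ is the partition of $[r]$ into the connected components of $([r],F)$. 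The interval above $\pi_F$ in the partition lattice is isomorphic to $\PPP([m])$, with $m$ the number of those components, so this sum equals $\sum_{\rho\in\PPP([m])}(-1)^{\#\rho-1}(\#\rho-1)!$, which is $1$ for $m=1$ and $0$ for $m\ge2$ (the classical fact that a joint cumulant of constants vanishes). Hence
\[ S_\sigma=\sum_{\substack{F\subseteq E(K_r)\\ ([r],F)\text{ connected}}}\ \prod_{e=\{b,b'\}\in F}\bigl(q^{c_{b,b'}(\sigma)}-1\bigr), \]
which is, up to the substitution, a specialization of the Tutte polynomial of the interaction graph of $\sigma$ (the graph on $[r]$ with an edge $\{b,b'\}$ whenever $c_{b,b'}(\sigma)>0$) --- equivalently a generating function over its $G$-parking functions --- precisely as in the title.

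Finally, a connected graph on $r$ vertices has at least $r-1$ edges, and each $q^{c}-1$ lies in $(q-1)\Z[q]$ (it is $(q-1)(1+q+\cdots+q^{c-1})$, or $0$ when $c=0$); therefore $S_\sigma\in(q-1)^{r-1}\Z[q]$ for every $\sigma$. Summing over the fillings of $\nu$, grouping monomials by content and symmetrizing, we conclude that $\ka(\tilde H_{\la^1},\dots,\tilde H_{\la^r})$ is a $\Z[q,t]$-linear combination of the $m_\mu$ with every coefficient in $(q-1)^{r-1}\Z[q,t]$; that these coefficients are honest polynomials in $q,t$ follows already from Haglund's formula (which exhibits each $\tilde H_\mu$ as an $\N[q,t]$-combination of monomials) together with the integrality of the numbers $(-1)^{\#\pi-1}(\#\pi-1)!$. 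This is exactly the claimed membership $\ka(\tilde H_{\la^1},\dots,\tilde H_{\la^r})\in\Z[q,t]\{(q-1)^{r-1}m_\mu\}_\mu$. I expect the one genuinely hard step to be the refined Haglund formula of the second paragraph; once it is in hand, the third and fourth paragraphs --- a single Möbius inversion on the partition lattice followed by the trivial edge-count bound --- are bookkeeping.
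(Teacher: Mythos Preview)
Your proposal is correct and follows the same route as the paper: the column-interleaving bijection on fillings, the additivity of $\maj$ (the paper's eq.~\eqref{eq:EqualityOfMaj}), and the pairwise decomposition of $\inv$ into same-layer and cross-layer contributions (eq.~\eqref{eq:InvIdentity}) are exactly the content of Section~\ref{subsect:Identification}, and your partition-lattice computation---expanding $q^{c_{b,b'}}=1+z_{b,b'}$ and showing that only connected $F$ survive---is the unpacked form of Lemma~\ref{lem:TutteCumulant}, which the paper states as $(q-1)^{r-1}\Tu_G(1,q)=\sum_\pi(-1)^{\#\pi-1}(\#\pi-1)!\prod_{B}q^{\#E|_B}$. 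The only difference is packaging: the paper identifies the quotient $S_\sigma/(q-1)^{r-1}$ with the $G$-inversion polynomial $\I_{G^\sigma}(q)\in\N[q]$ (via eqs.~\eqref{eq:connect} and \eqref{eq:wow}), thereby getting monomial $q,t$-positivity (Theorem~\ref{theo:MacdonaldCumuFormula}) rather than merely the divisibility of Theorem~\ref{theo:podzielnosc}, whereas your connected-subgraph expansion stops at the edge-count bound, which is all that is needed here.
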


\subsubsection{Motivations}
\label{subsub:motivations}
The initial motivation for studying cumulants $\ka(\tilde{H}_{\la_1},\dots,
\tilde{H}_{\la_r})$ comes from our attempts \cite{DolegaFeray2016} to prove the $b$-conjecture
-- one of the major open problems in the theory of Jack symmetric
functions posed by Goulden and Jackson \cite{GouldenJackson1996}.
The $b$-conjecture states that the coefficients of a certain multivariate
generating function $\psi(\xx,\yy,\zz;\beta)$ involving Jack symmetric functions can be
interpreted as weighted generating functions of graphs embedded into
surfaces. Except some special cases
\cite{BrownJackson2007, LaCroix2009, KanunnikovVassilieva2016,
  Dolega2017a} not much is known and the $b$-conjecture is still wide
open. However, in our recent paper \cite{DolegaFeray2016} the author and F\'eray were able to
rewrite the function $\psi(\xx,\yy,\zz;\beta)$ as a linear combination
of cumulants of Jack symmetric functions, which are specializations of
cumulants $\ka(\tilde{H}_{\la_1},\dots,
\tilde{H}_{\la_r})$, and we showed that \cref{theo:podzielnosc}
implies a partial solution of the $b$-conjecture. In view of this result, understanding of the structure of cumulants $\ka(\tilde{H}_{\la_1},\dots,
\tilde{H}_{\la_r})$ is of great interest as a potential tool for solving
the $b$-conjecture in general.

Furthermore, cumulants $\ka(\tilde{H}_{\la_1},\dots,
\tilde{H}_{\la_r})$ are of a special interest from the following
reason: the structure of Macdonald polynomials $\tilde{H}_\mu$ is
directly related to algebraic geometry \cite{Haiman2002}. It turns out that
cumulants appear naturally in algebraic geometry \cite{DiNardoWynnZwiernik2017} and it is
interesting to investigate what kind of geometric information is encoded
in the structure of $\ka(\tilde{H}_{\la_1},\dots,
\tilde{H}_{\la_r})$. Secondly, we recall that one of the most typical
application of cumulants in the context of probability is to show that a certain family of random
variables is asymptotically Gaussian. Especially, when one deals with
discrete structures, whose ``observables'' form a nice algebraic structure, the main
technique is to show that conditional cumulants have a certain \emph{small cumulant
  property} exactly of the same form as in \cref{theo:podzielnosc}; see \cite{Sniady2006c, FerayMeliot2012, Feray2013,
  DolegaSniady2018}. It is therefore natural to ask for a
probabilistic interpretation of \cref{theo:podzielnosc}, which leads
to some kind of a central limit theorem. The most natural framework
to investigate this problem seems to be related to Macdonald
processes introduced by Borodin and Corwin \cite{BorodinCorwin2014}
and it would be interesting to link our work with this probabilistic
aspect.

Finally, the biggest motivation for us to study cumulants $\ka(\tilde{H}_{\la_1},\dots,
\tilde{H}_{\la_r})$ is their beautiful and mysterious
combinatorial structure. In particular, their Schur--positivity is yet
to be resolved. For partitions $\la^1,\dots,\la^r$
we define the \emph{Macdonald cumulant}
$\ka(\la^1,\dots,\la^r)(\xx;q,t)$ as
\begin{equation}
\label{eq:MacdoCumu}
\ka(\la^1,\dots,\la^r)(\xx;q,t) := \frac{\ka(\tilde{H}_{\la_1}(\xx;q,t),\dots,
\tilde{H}_{\la_r}(\xx;q,t))}{(q-1)^{r-1}}.
\end{equation}
We recall that monomial symmetric functions have integer coefficients in the Schur
basis expansion. Thus, one can reformulate \cref{theo:podzielnosc} as
follows: for any partitions $\la^1,\dots,\la^r$ one has the following expansion
\[ \ka(\la^1,\dots,\la^r) \in \Z[q,t]\{s_\mu\}_\mu.\]
Remarkably, extensive computer simulations suggest that Macdonald
cumulants are, in fact, Schur--positive, which we conjectured in our
recent paper \cite{Dolega2017}:

\begin{conjecture}[Higher--order Macdonald positivity conjecture]
\label{conj:GeneralizedKostka}
Let $\lambda^1,\dots,\lambda^r$ be partitions. Then, for any
partition $\mu$, the \emph{multivariate $q,t$-Kostka number}
$\tilde{K}^{(q,t)}_{\mu; \la^1,\dots,\la^r}$ defined by the following
expansion
\[ \ka(\la^1,\dots,\la^r) :=
  \sum_{\mu}\tilde{K}^{(q,t)}_{\mu; \la^1,\dots,\la^r}\ s_\mu\]  
is a polynomial in $q,t$ with
\textbf{nonnegative integer} coefficients.
\end{conjecture}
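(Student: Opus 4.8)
The plan is to upgrade the fundamental quasisymmetric positivity of $\ka(\la^1,\dots,\la^r)$ (established earlier in the paper) to full Schur positivity by equipping the indexing set of its fundamental expansion with the structure of a \emph{dual equivalence graph} in the sense of Assaf, exactly as Assaf did to deduce the classical Macdonald positivity from the Haglund--Haiman--Loehr formula. Recall the abstract template: if a quasisymmetric function can be written as $\sum_{T \in \SSS} q^{a(T)} t^{b(T)} F_{\iDes(T)}$, where $\SSS$ is a finite set of ``standard objects'' carrying elementary involutions $d_2,\dots,d_{n-1}$ (with $n=|\mu|$) that satisfy Assaf's local axioms and that \emph{preserve} the bistatistic $(a,b)$, then the function is Schur positive; more precisely, each connected component $C$ of the resulting graph contributes a single term $q^{a_C} t^{b_C} s_{\la(C)}$, where $\la(C)$ is read off from the isomorphism type of $C$. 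Our goal is to realize $\ka(\la^1,\dots,\la^r)$ in precisely this form and then verify the axioms.

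First I would fix the concrete combinatorial model. Starting from the combinatorial formula for Macdonald cumulants, which expresses $\ka(\la^1,\dots,\la^r)$ through fillings of the associated multidiagram weighted by a $G$-inversion statistic (equivalently by the Tutte/$G$-parking-function specialization), I would pass to the standardized fillings and, using the fundamental positivity already proven, record an identity $\ka(\la^1,\dots,\la^r) = \sum_{T \in \SSS} q^{\inv_G(T)} t^{\maj(T)} F_{\iDes(T)}$ over an explicitly described set $\SSS$ of standard cumulant fillings with no cancellation remaining. The point of this step is to produce a \emph{manifestly nonnegative} fundamental expansion whose index set $\SSS$ is combinatorially rigid enough to support dual equivalence involutions.

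Next I would define the involutions $d_i$ on $\SSS$ by transporting Assaf's elementary dual equivalences, acting on a standard filling by the usual local rearrangement of the entries $i-1,i,i+1$ according to their relative reading-order positions. The two requirements are that each $d_i$ be well defined on cumulant fillings (it must respect the multidiagram shape and the $G$-structure) and that it preserve the pair $(\inv_G,\maj)$, so that the bistatistic is constant on each connected component. I would then verify the Assaf axioms: that $d_i$ is an involution, that $d_i$ and $d_{i+1}$ generate only the permitted local types on $\{i-1,\dots,i+2\}$, and that $d_i$ commutes with $d_j$ for $|i-j|\ge 3$. Granting these, every connected component of $(\SSS,\{d_i\})$ is a standard dual equivalence graph, hence isomorphic to the graph on standard Young tableaux of a single shape $\la(C)$, and the component contributes $q^{a_C}t^{b_C}s_{\la(C)}$ with $a_C,b_C \in \Z_{\ge 0}$ (since $\inv_G,\maj\ge 0$); summing over components yields $\tilde{K}^{(q,t)}_{\mu;\la^1,\dots,\la^r}\in\Z_{\ge 0}[q,t]$, as claimed.

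The hard part will be the verification that the dual equivalence involutions preserve the $G$-inversion statistic $\inv_G$ (and the major index) on cumulant fillings. In Assaf's original setting this compatibility rests on a delicate case analysis tightly bound to the single-diagram geometry of the arm/leg inversions; for cumulants the relevant statistic is genuinely a graph invariant coming from the spanning-tree/parking-function structure attached to $G$, and the elementary moves $d_i$ need not interact with it in the same local manner. I expect that for some families $(\la^1,\dots,\la^r)$ the naive involutions will \emph{fail} to preserve $\inv_G$, so the real work is to discover the correct modification of $d_i$ adapted to the $G$-structure---or, failing that, to construct directly a bigraded $\Sym{n}$-module whose Frobenius characteristic is $\ka(\la^1,\dots,\la^r)$, a representation-theoretic route that would parallel, and presumably be as difficult as, Haiman's geometric proof of the original positivity theorem. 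This is why I regard the full conjecture, beyond the hook case settled in this paper, as the decisive obstacle.
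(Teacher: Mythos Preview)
The statement you are attempting is a \emph{conjecture}, not a theorem: the paper does not prove it and does not claim to. What the paper actually establishes is the special case in which $\mu$ is a hook $(n-s,1^s)$ (\cref{theo:HookKostka}), via the plethystic substitution $\XX\mapsto 1-u$ applied to the super-filling expansion; the general conjecture is left open and is discussed among the open problems in \cref{sec:open}. So there is no ``paper's own proof'' to compare your proposal against.

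As for the proposal itself, it is not a proof but a research program, and you correctly identify the gap. The Assaf dual-equivalence machinery requires the elementary involutions $d_i$ to preserve the bistatistic, and here the $q$-statistic is not a local inversion count but the $G$-inversion polynomial $\I_{G^\si_{\la^1,\dots,\la^r}}(q)$, a global graph invariant (a Tutte specialization) attached to the filling. There is no reason to expect the standard $d_i$ to preserve this graph up to isomorphism, and as you yourself note, the naive transport will fail in general. Until one produces either modified involutions compatible with the $G$-structure or a bigraded $\Sym{n}$-module with the correct Frobenius characteristic, the conjecture remains exactly that.
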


Note that the case $r=1$ corresponds to the Macdonald positivity
ex-conjecture, so our conjecture generalizes it from the cumulant of
order $1$ to cumulants of higher order. 


\subsection{The main results}

Our main result is an explicit combinatorial formula for Macdonald
cumulants $\ka(\la^1,\dots,\la^r)$. Before we go into details of the formula, let us briefly summarize its consequences. 

First of all,
our main result strengthens \cref{theo:podzielnosc} twofold. On the one
hand -- \cref{theo:podzielnosc} is an immediate consequence of
our explicit formula, while the original proof relied on some
complicated induction and was not constructive; in particular it
asserted that the coefficients of the monomial expansion of Macdonald
cumulants belong to $\Z[q,t]$ by some abstract argument. On the
other hand our
formula shows that Macdonald
cumulants are monomial--positive and the coefficients in this
expansion have an explicit combinatorial interpretation in terms of
counting trees with some weights. 

Secondly, we deduce from our formula an explicit, $q,t$--positive expansion of
Macdonald cumulants in fundamental quasisymmetric
functions.

Finally, we would like to comment a relation between our main result
and \cref{conj:GeneralizedKostka}. There is a well-known combinatorial formula expanding Schur
polynomials as a linear combination of monomial symmetric functions
with nonnegative integer coefficients. One can invert this formula to
expand monomial symmetric function in Schur basis and the coefficients
in this expansion
are no longer positive in general. In particular,
\cref{conj:GeneralizedKostka} stronger than positivity and
integrality of Macdonald
cumulants in monomial basis. However, it turned out that our combinatorial formula implies that \cref{conj:GeneralizedKostka} holds true in the
special case of hooks. In other words, for any partitions
$\la^1,\dots,\la^r$ and for any partition $\mu$ of the hook shape
(of the form $\mu = (r+1,1^s)$ for some nonnegative integers
$r,s$) the multivariate $q,t$-Kostka number
$\widetilde{K}_{(r+1,1^s); \la^1,\dots,\la^r}(q,t)$ is a polynomial in $q,t$ with
nonnegative integer coefficients. Here, as before, we interpret the
polynomial $\widetilde{K}_{(r+1,1^s); \la^1,\dots,\la^r}(q,t)$ as a
generating series of some trees. There is a hope that our
combinatorial formula can be transformed into a combinatorial proof of
the Schur--positivity of Macdonald cumulants in the future (see
\cref{sec:open} for more details), but so far this is a big open
problem even in the case of Macdonald polynomials.

\subsubsection{Graphs and our main theorem}
Let us introduce the graph theory terminology necessary for
presenting our
main result. Let $G = (V,E)$ be a
connected multigraph, possibly with loops, where $V = [r]$. The vertex
with label $1$ is called the \emph{root}. For
any vertices $i,j \in V$ let $e_{i,j}(G)$ denote the number of edges
linking $i$ with $j$ in $G$. We say that
$H \subset G$ is a \emph{spanning subgraph} of $G$, if for any
vertex $v \in V$ there exists an edge in $H$ containing $v$. We say
that $T \subset G$ is a \emph{spanning tree} of $G$ if it is a
spanning subgraph of $G$ and it is a tree
(it is connected and has no cycles). For a pair of different
vertices $i,j \in V$ of $T$ we say that $j$ is a \emph{descendant} of $i$ if
$i$ lies on the shortest path from $j$ to the root, and we call $i$ an
\emph{ancestor} of $j$. If $i$ is an ancestor of $j$ adjacent to it,
we call it a \emph{parent} of $j$.
We say that a pair $(i,j)$ which does not contain the root is a \emph{$\ka$-inversion} of a spanning
tree $T$ of $G$ if it is an \emph{inversion} ($i$ is an ancestor of
$j$ and $i>j$) and $j$ is adjacent to the
parent of $i$ in $G$.
Let $\tilde{G}$ be a graph obtained
from $G$ by replacing all multiple edges by single ones.
We define the \emph{$G$-inversion polynomial} by
\begin{equation}
\label{eq:GesselSagan'}
\I_G(q) = q^{\text{number of loops in } G}\sum_{T \subset \tilde{G}}
q^{\ka(T)}\prod_{\{i,j\} \in T}[e_{i,j}(G)]_q,
\end{equation}
where the sum runs over all spanning trees of $\tilde{G}$, 
\begin{equation}
\label{eq:ka(T)}
\ka(T) = \sum_{\{i,j\} - \ka-\text{inversion in }
  T}e_{\text{parent}(i),j}(G),
\end{equation}
and we use a standard notation $[n]_q := \frac{q^n-1}{q-1} =
1+q+\cdots+q^{n-1}$.
For example, if we want to compute $\I_G(q)$ for $G$ from
\cref{fig:Przyklad}, we first notice that there are 3 spanning trees
of $\tilde{G}$ which we denote by $T_1,T_2$ and $T_3$.
\begin{figure}[h!]
\vspace{-10pt}
\label{subfig:A3}
    \begin{tikzpicture}[scale=0.3, white/.style={circle,draw=black,fill=white,inner sep=3pt},
	blue/.style={circle,draw=black,fill=cyan!70,inner sep=3pt}, red/.style={circle,draw=black,fill=BrickRed,inner sep=3pt}]

\begin{scope}
		\draw (3,0) node (W) [white] {$1$};
		\draw (0,5) node (B) [blue] {$2$};
		\draw (6,5) node (R) [red] {$3$};

                \draw[gray, thick] (W) to node[anchor=east] {$6$} (B);
                \draw[gray, thick] (W) to node[anchor=west] {$6$} (R);
                \draw[gray, thick] (B) to node[anchor=south] {$8$} (R);
                \draw[gray, thick] (B) to 
                [out=120,in=160,looseness=4] (B);
                \draw[gray, thick] (B) to
                [out=100,in=180,looseness=5] (B);
                \draw[gray, thick] (R) to
                [out=0,in=80,looseness=5] (R);
                \draw[gray, thick] (W) to
                [out=310,in=230,looseness=5] (W);
\draw (3,-4) node (X) {$\tilde{G}$};
\end{scope}
    \begin{scope}[xshift=12cm]

		\draw (3,0) node (W) [white] {$1$};
		\draw (0,5) node (B) [blue] {$2$};
		\draw (6,5) node (R) [red] {$3$};

                \draw[black, double, very thick] (W) to node[anchor=east] {$6$} (B);
                \draw[gray, thick] (W) to node[anchor=west] {$6$} (R);
                \draw[black, double, very thick] (B) to node[anchor=south] {$8$} (R);
                \draw[gray, thick] (B) to 
                [out=120,in=160,looseness=4] (B);
                \draw[gray, thick] (B) to
                [out=100,in=180,looseness=5] (B);
                \draw[gray, thick] (R) to
                [out=0,in=80,looseness=5] (R);
                \draw[gray, thick] (W) to [out=310,in=230,looseness=5]
                (W);
\draw (3,-4) node (X) {$T_1$};
\end{scope}
    \begin{scope}[xshift=24cm]

		\draw (3,0) node (W) [white] {$1$};
		\draw (0,5) node (B) [blue] {$2$};
		\draw (6,5) node (R) [red] {$3$};

                \draw[black, double, very thick] (W) to node[anchor=east] {$6$} (B);
                \draw[black, double, very thick] (W) to node[anchor=west] {$6$} (R);
                \draw[gray, thick] (B) to node[anchor=south] {$8$} (R);
                \draw[gray, thick] (B) to 
                [out=120,in=160,looseness=4] (B);
                \draw[gray, thick] (B) to
                [out=100,in=180,looseness=5] (B);
                \draw[gray, thick] (R) to
                [out=0,in=80,looseness=5] (R);
                \draw[gray, thick] (W) to
                [out=310,in=230,looseness=5] (W);
\draw (3,-4) node (X) {$T_2$};
\end{scope}
    \begin{scope}[xshift=36cm]
		\draw (3,0) node (W) [white] {$1$};
		\draw (0,5) node (B) [blue] {$2$};
		\draw (6,5) node (R) [red] {$3$};

                \draw[gray, thick] (W) to node[anchor=east] {$6$} (B);
                \draw[black, double, very thick] (W) to node[anchor=west] {$6$} (R);
                \draw[black, double, very thick] (B) to node[anchor=south] {$8$} (R);
                \draw[gray, thick] (B) to 
                [out=120,in=160,looseness=4] (B);
                \draw[gray, thick] (B) to
                [out=100,in=180,looseness=5] (B);
                \draw[gray, thick] (R) to
                [out=0,in=80,looseness=5] (R);
                \draw[gray, thick] (W) to [out=310,in=230,looseness=5]
                (W);
\draw (3,-4) node (X) {$T_3$};
\end{scope}
\end{tikzpicture}
        \caption{$G = ([3],E)$, where $e_{1,2}(G) = e_{1,3}(G) = 6$
          and $e_{2,3}(G) = 8$. $T_1,T_2$ and $T_3$ are indicated by
          double edges.}
        \label{fig:Przyklad}
    \end{figure}
A pair $(3,2)$
is not a $\ka$--inversion of $T_1$ nor $T_2$, but it is a
$\ka$--inversion of $T_3$. Therefore $\ka(T_1) = \ka(T_2) = 0$ and
$\ka(T_3) = e_{\text{parent}(3),2}(G) = e_{1,2}(G) = 6$. Thus, 
\begin{multline*}
\I_G(q) = q^4\left(q^0[6]_q[8]_q+q^0[6]_q[6]_q+q^6[6]_q[8]_q\right)
  =\\
q^4(1+q+q^2+q^3+q^5)(2+2q+2q^2+2q^3+2q^4+2q^5+2q^6+2q^7+q^8+q^9+q^{10}+q^{11}+q^{12}+q^{13}).
\end{multline*}
Note that the $G$--inversion polynomial for a complete graph $G = K_r$
is given by
\[ \I_G(q) = \sum_{T} q^{\inv(T)}.\]
Thus, this is an \emph{inversion polynomial} -- a polynomial which counts the
labeled trees on $r$ vertices with respect to the number
of their inversions. The $G$-inversion polynomial has various
interpretations in terms of \emph{Tutte polynomials, $G$-parking
functions, the abelian sandpile model} or \emph{Tesler matrices} among
others -- see \cref{sec:TutteParkinETC,sec:open} for more details.

We are ready to formulate our main result. 
\begin{theorem}
\label{theo:MacdonaldCumuFormula}
Let $\la^1,\dots,\la^r$ be partitions. Then, the following formula
holds true:
\begin{equation}
\label{eq:MacdonaldCumuFormula}
\ka(\la^1,\dots,\la^r) = \sum_{\si: \la^{[r]} \to
  \N_+} \I_{G^\si_{\la^1,\dots,\la^r}}(q)\ t^{\maj(\sigma)}\ \xx^\sigma,
\end{equation}
where $\la^B := \bigoplus_{b \in B}\la^b$ and $\xx^\sigma := \prod_{\square \in \la^{[r]}}x_{\sigma(\square)}$.
\end{theorem}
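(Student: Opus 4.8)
The plan is to start from Haglund's combinatorial formula for the modified Macdonald polynomials $\tilde{H}_\mu(\xx;q,t)$ and push it through the alternating sum \eqref{eq:DefCumulants} that defines the cumulant $\ka(\tilde{H}_{\la^1},\dots,\tilde{H}_{\la^r})$. Recall that Haglund's formula writes $\tilde{H}_\mu = \sum_{\sigma} q^{\inv(\sigma)} t^{\maj(\sigma)} \xx^\sigma$ over fillings $\sigma$ of the diagram of $\mu$; the key feature is that when we form $u_B = \tilde{H}_{\la^B} = \tilde{H}_{\bigoplus_{b\in B}\la^b}$, a filling of $\la^B$ is the same thing as a filling of the common content set $\la^{[r]}$ restricted appropriately, so after expanding the product $\prod_{B\in\pi}u_B$ in \eqref{eq:DefCumulants} all the terms in the cumulant are indexed by a \emph{single} filling $\sigma\colon\la^{[r]}\to\N_+$, with the $t^{\maj(\sigma)}\,\xx^\sigma$ factor coming out uniformly (the major index and monomial weight are additive under $\oplus$), and only the $q$-statistic depending on the set partition $\pi$. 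So the first step is to fix $\sigma$ and isolate the coefficient; this reduces the theorem to the purely $q$-combinatorial identity
\begin{equation*}
\frac{1}{(q-1)^{r-1}}\sum_{\pi\in\PPP([r])} (-1)^{\#\pi-1}(\#\pi-1)!\ \prod_{B\in\pi} q^{\inv_B(\sigma)} \;=\; \I_{G^\sigma_{\la^1,\dots,\la^r}}(q),
\end{equation*}
where $\inv_B(\sigma)$ is the inversion statistic of Haglund computed inside the merged diagram $\la^B$, and the graph $G^\sigma$ records which pairs of rows/indices create ``cross-block'' inversions.

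Second, I would unpack what $\inv_{\la^B}(\sigma) - \sum_{b\in B}\inv_{\la^b}(\sigma)$ actually counts: it is a sum, over pairs $i<j$ in $B$ whose rows get stacked, of a nonnegative integer $e_{i,j}$ depending only on $\sigma$ (the number of ``attacking'' cells between the $\la^i$-part and $\la^j$-part after merging). This is exactly the data of the multigraph $G^\sigma_{\la^1,\dots,\la^r}$ on vertex set $[r]$ with $e_{i,j}$ edges between $i$ and $j$ (and the loops recording self-inversions within a single $\la^i$ that are ``new''). Thus $\prod_{B\in\pi}q^{\inv_B(\sigma)} = q^{\text{(loops)}}\cdot q^{\sum_{i<j\text{ in same block}} e_{i,j}} \cdot (\text{terms independent of }\pi)$, and the $\pi$-independent part cancels against the $\xx^\sigma t^{\maj}$ normalization. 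We are then staring at the statement that
\begin{equation*}
\frac{1}{(q-1)^{r-1}}\sum_{\pi\in\PPP([r])}(-1)^{\#\pi-1}(\#\pi-1)!\prod_{B\in\pi}\Big(\prod_{\{i,j\}\subseteq B} q^{e_{i,j}}\Big)
\end{equation*}
equals the $G$-inversion polynomial $\sum_{T\subseteq\tilde G} q^{\ka(T)}\prod_{\{i,j\}\in T}[e_{i,j}]_q$ (times $q^{\#\text{loops}}$). This is an identity about weighted graphs that does not mention Macdonald polynomials at all, so the third step is to prove it as a standalone combinatorial lemma. The natural route is either (a) to recognize the left-hand side as the exponential-formula / Möbius-function expression for the ``connected'' part of the weight $\prod_{\{i,j\}}q^{e_{i,j}}$ and then invoke a known tree expansion — this is precisely the kind of identity that expresses a specialization of the Tutte polynomial $\Tu_G(1,q)$, equivalently the generating function of $G$-parking functions, as an inclusion–exclusion over connected spanning subgraphs collapsing to spanning trees; or (b) to prove it directly by induction on $r$, deleting/contracting the edges at the root vertex and matching the recursion of $\I_G$ (the standard deletion–contraction recursion for $\Tu_G(1,q)$, which in the $G$-parking-function picture is the recursion on the root's allowed values) against the recursion obtained by splitting off the block of $\pi$ containing the root. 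I would favour (b) since it keeps everything explicit and makes the appearance of the $\ka$-inversion statistic $\ka(T)=\sum e_{\mathrm{parent}(i),j}$ transparent: the ``number of inversions that survive'' in a spanning tree is exactly the contraction bookkeeping.

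The main obstacle I anticipate is the second step — precisely pinning down the combinatorial meaning of the cross-block inversion counts $e_{i,j}$ and checking that merging diagrams via $\oplus$ interacts with Haglund's $\inv$ statistic in the clean additive-plus-correction way claimed, uniformly over all $\sigma$, including the subtle cases with repeated parts among the $\la^i$, with cells in the same column across different $\la^i$, and with the contribution of the basement/first column that feeds the ``loop'' terms. Getting the normalization by $(q-1)^{r-1}$ to come out on the nose (rather than off by a sign or a power of $q$) will require care: one wants $[e_{i,j}]_q=(q^{e_{i,j}}-1)/(q-1)$ to appear, so each spanning-tree edge must absorb exactly one factor of $(q-1)$, and the count of tree edges is $r-1$, which is why the formula is divisible by $(q-1)^{r-1}$ — but verifying that the non-tree edges contribute only through the exponent $\ka(T)$ and not through extra $(q-1)$ factors is the delicate point. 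Once the graph-theoretic lemma in step three is isolated, it is self-contained and I expect it to follow from a clean deletion–contraction induction; the Macdonald-specific content is entirely in steps one and two.
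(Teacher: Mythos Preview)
Your outline is correct and matches the paper's proof. Two brief notes. First, the paper takes your route (a) for the graph lemma rather than the deletion--contraction induction you favour: it proves via M\"obius inversion on set partitions (using $nc_G(q)=(1+q)^{\#E}$ versus $c_G(q)=q^{\#V-1}\Tu_G(1,q+1)$) that
\[
(q-1)^{1-r}\sum_{\pi\in\PPP([r])}(-1)^{\#\pi-1}(\#\pi-1)!\prod_{B\in\pi} q^{\#E|_B} \;=\; \Tu_G(1,q),
\]
and then quotes the Gessel--Sagan identity $\I_G(q)=\Tu_G(1,q)$; this handles the divisibility by $(q-1)^{r-1}$ and the identification of the $\ka(T)$ exponent in one stroke, with no tree/non-tree bookkeeping. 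Second, your step two is cleaner than you fear: once $G^\sigma$ is defined so that loops at $i$ count the single-colour inversion pairs and edges $\{i,j\}$ count the two-colour ones, one has $\inv(\sigma^B)=\#E|_B$ on the nose for every $B\subset[r]$, so there is no extra ``$\pi$-independent part'' to cancel against anything --- the loop contribution is $\pi$-independent and is absorbed exactly by the $q^{\#\text{loops}}$ prefactor in the definition \eqref{eq:GesselSagan'} of $\I_G$. The column-merging subtleties you worry about are entirely absorbed into the fixed colouring of $\la^{[r]}$ described in \cref{subsect:Identification}; after that, the identity $\inv(\sigma^B)=\#E|_B$ is immediate and uniform in $\sigma$.
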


The summation index in \eqref{eq:MacdonaldCumuFormula} runs over all
the fillings $\sigma$ of a Young diagram $\la^{[r]}$ by positive
integers, $G^{\sigma}_{\la^1,\dots,\la^r}$ is a certain multigraph
(we refer for its construction to \eqref{subsect:Identification}), and
$\maj(\sigma)$ is a certain statistic of the filling $\sigma$ (see
\eqref{subsec:Fillings} for the precise definition). We finish this
section by mentioning that our formula
\eqref{eq:MacdonaldCumuFormula} specializes to the celebrated formula
of Haglund, Haiman and Loehr \eqref{eq:Haglund} when $r=1$.
The work of Haglund, Haiman and Loehr \cite{HaglundHaimanLoehr2005},
where many consequences of the formula \eqref{eq:Haglund}
were presented, was a source of inspiration for our research and most
of the consequences of our formula \eqref{eq:MacdonaldCumuFormula} can
be derived in a similar manner as in \cite{HaglundHaimanLoehr2005}.

\subsection{Organization of the paper}
\label{subsec:outline}

In \cref{sec:TutteParkinETC} we discuss various interpretations of the
polynomial $\I_G(q)$ and we prove some of its
properties. \cref{sec:preliminaries} introduces the necessary
background on the combinatorics of Macdonald polynomials. \cref{sec:Coloring} is devoted to the proof
of our main result and explains the
construction of graphs involved in our formula. \cref{sec:Fundamental}
gives an explicit formula for the fundamental quasisymmetric functions
expansion of Macdonald cumulants. \cref{sec:Kostka} is devoted to the
proof of the Schur--positivity of Macdonald cumulants in the case of hooks.
In \cref{sec:FullyColored} we investigate a certain subfamily of
Macdonald cumulants which arises from the study of the $b$-conjecture and we show
that this family is a basis of the
symmetric functions algebra. We finish with \cref{sec:open}, where we
state some open problems and possible directions for the future research.

\section{Tutte
  polynomials, $G$-inversion polynomials and $G$-parking functions}
\label{sec:TutteParkinETC}

\subsection{Tutte polynomials and $G$-inversion polynomials}
\label{subsec:Tutte}

From now on each graph $G = (V,E)$ can possibly have multiple edges and
loops. Moreover, in this section we additionally assume that $G$ is connected. The
\emph{Tutte polynomial} of $G$ denoted by $\Tu_G(x,y)$ was introduced
by Tutte in \cite{Tutte1947}, and its various specializations give rise to many important graph invariants
such as the number of its spanning trees or the number of its acyclic
orientations, among others. This remarkable interdisciplinary of $\Tu_G(x,y)$ made it one
of the most important invariants in modern graph theory, see \cite{Bollobas1998}.


Tutte polynomials are defined by the following equality:
\begin{equation}
\label{eq:DefTutte}
\Tu_G(x,y) = \sum_{H \subset G}(x-1)^{c(H)-1}\ (y-1)^{\#E(H)-\#V+c(H)},
\end{equation}
where we sum over all (possibly disconnected) sub-graphs of $G$, $c(H)$ denotes the
number of connected components of $H$, and $E(H)$ is the set of edges
of $H$. Tutte \cite{Tutte1954} noticed that for a connected graph $G$ the specialization
$\Tu_G(1,1)$ counts the number of \emph{spanning trees} of $G$,
and this
observation allows him to express $\Tu_G(x,y)$ as the bivariate
generating function of spanning trees of $G$:
\[ \Tu_G(x,y) = \sum_{T \subset G}x^{\ia(T)}\ y^{\ea(T)} \in \N[x,y],\]
where $\ia(T)$, and $\ea(T)$ are certain statistics of
a spanning tree $T$, called \emph{internal} and \emph{external activities}. 

In this paper we will be entirely focused on the specialization
$\Tu_G(1,q)$, which is of a special
interest as it appears naturally in many different contexts. Gessel, and
Gessel with Sagan
\cite{Gessel1995,GesselSagan1996} interpreted $\Tu_G(1,q)$
as the generating
function of spanning trees of $G$ with respect to the so-called
\emph{$\ka$-inversion} statistic, which is more natural and simpler
than the external activity.
We label vertices of $G$ in an arbitrary way by consecutive nonnegative integers. We also set
for every pair of distinct vertices of $G$ an arbitrary linear order
on the set of edges linking this pair of vertices and for any edge $e
\in E(G)$ we define $s(e)$ as the number of edges in $G$ strictly greater
then $e$. We extend this definition to any subgraph $H \subset G$
by setting $s(H) := \sum_{e \in H}s(e)$. We recall that an
inversion $i > j$ in a tree $T \subset G$ forms a
\emph{$\ka$-inversion} if the parent of $i$ is
adjacent to $j$ in the graph $G$. It was shown
by Gessel and Sagan \cite{GesselSagan1996} that
\[ \Tu_G(1,q) = q^{\text{number of loops in} G}\ \sum_{T \subset G} q^{\ka(T)+s(T)},\]
where we sum over all spanning trees of $G$, and $\ka(T)$ is given by \eqref{eq:ka(T)}.
It is easy to show that above formula can be rewritten in the form
\eqref{eq:GesselSagan'}, thus
\begin{equation} 
\label{eq:wow}
\I_G(q) = \Tu_G(1,q).
\end{equation}
In particular, the polynomial $\I_G(q)$ depends only on the structure
of $G$ and is invariant under permuting the labels of the vertices.
When $G$ is a graph with no multiple edges nor loops, Gessel \cite{Gessel1995} found that
\[ \I_G(q) = \sum_{\substack{T \subset G\\ \ka(T) = 0}}\prod_{w \in V\setminus\{v\}}[\delta_T(w)]_q,\]
where $v$ is the root of $G$, $\delta_T(w)$ is the number of descendants of $w$ (including
$w$) adjacent to the parent of $w$ in $G$.
In fact, the same argument as used by Gessel allows us to extend his formula to the
general case of graphs (with multiple edges and loops), which will be useful for us later. We
recall that a graph $\tilde{G}$ is obtained from $G$ by replacing all multiple edges by
single ones.

\begin{proposition}
\label{prop:TutteAnother}
Let $G$ be a graph. Then
\begin{equation}
\label{eq:TutteAnother}
\I_G(q) = q^{\text{number of loops in }G}\sum_{\substack{T \subset
    \tilde{G}\\ \ka(T) = 0}}\prod_{w \in V\setminus\{v\}}[\delta_T(w)]_q,
\end{equation}
with
\[ \delta_T(w) = \sum_{i}e_{i,\text{parent}(w)}(G),\]
where we sum over all descendants of $w$ (including
$w$), and $v$ is the root of $G$.
\end{proposition}

\begin{proof}
Using \eqref{eq:DefTutte} we obtain the formula for the specialization
\begin{equation} 
\label{eq:connect}
c_G(q) := q^{\#V-1}\Tu_G(1,q+1) = \sum_{H \subset G} q^{\#E(H)},
\end{equation}
where we sum over all connected sub-graphs of $G$. Let $U \subset
V$ be a non-empty subset of vertices of $G$, and for $w \in V$ we
define $d_G(U,w)$ as the number of edges in $G$ connecting $w$ with
some vertex from $U$. Note that erasing a vertex $w$ from the
connected graph $H$ splits this graph into a collection of connected sub-graphs
$H_1,\dots,H_l$ with the corresponding sets of their vertices
$V_1,\dots,V_l$. Then $\{V_1,\dots,V_l\} \in \PPP(V\setminus\{w\})$ and for all $1 \leq i \leq l$ there is at
least one edge linking $w$ with $V_i$. This leads to the following
recursion
\[ c_G(q) = q^{\text{number of loops in }w}\sum_{\pi \in
    \PPP(V\setminus\{w\})}\prod_{B \in
    \pi}\left((q+1)^{d_G(B,w)}-1\right)\ c_B(q), \]
which can be rewritten as
\begin{equation}
\label{eq:recursion}
\Tu_G(1,q) = q^{\text{number of loops in }w}\sum_{\pi \in
    \PPP(V\setminus\{w\})}\prod_{B \in
    \pi}[d_G(B,w)]_q\ \Tu_B(1,q)
\end{equation}
and it holds true for any vertex $w \in V$. It is therefore enough to show
that the right hand side of \eqref{eq:TutteAnother} satisfies the same recursion for $w=v$
being the root
(which implies that also for any other vertex).

Let $T$ be a spanning
tree of $\tilde{G}$ with $\ka(T) = 0$. If we delete its root $v$, we obtain
a collection of trees $T_1,\dots,T_l$ with the corresponding sets of their vertices
$V_1,\dots,V_l$, and their roots $v_1,\dots,v_l$. Then, for each $1 \leq i \leq l$ the graph $T_i$ is a
spanning tree of $G|_{V_i}$ and $\ka(T_i) = 0$. Thus, for any such
a tree $T$ one has
\begin{multline*} 
q^{\text{number of loops in }G}\prod_{w \in V\setminus\{v\}}[\delta_T(w)]_q
\\
= q^{\text{number of loops in }v}\prod_{1 \leq
    i \leq l}\left(\left[\sum_{w \in V_i}e_{v,w}(G)\right]_q\ q^{\text{number of
        loops in }G_i}\ \prod_{w \in
      V_i\setminus\{v_i\}}[\delta_{T_i}(w)]_q\right).
\end{multline*}
Since $\sum_{w \in V_i}e_{v,w}(G) = d_G(V_i,v)$, the right hand side
of \eqref{eq:TutteAnother} satisfies recursion \eqref{eq:recursion},
which finishes the proof of \cref{prop:TutteAnother}.
\end{proof}

We recall that a rooted tree $T$ is \emph{increasing} if it contains no inversions.

\begin{corollary}
\label{cor:specjalny}
Let $a_2,\dots,a_r$ be positive integers, and let
$G_{a_2,\dots,a_r} := (V,E)$ be a graph without loops such that
$e_{i,j}(G) = a_{\max(i,j)}$ for each $i\neq j \in V = [r]$. Then
\begin{equation} 
\label{eq:PolynomialP}
\I_{G_{a_2,\dots,a_r}}(q) = P_{a_2,\dots,a_r}(q) := \sum_{T \text{ increasing tree on  }[r]
  }\ \prod_{2 \leq i \leq r} [\delta_T(i)]_q,
\end{equation}
and $\delta_T(i) := \sum_j a_j$, where $j$ ranges over descendants of
$i$ (including $i$ itself ). 
\end{corollary}

\begin{proof}
\cref{prop:TutteAnother} asserts the following
formula
\[ \I_{G_{a_2,\dots,a_r}}(q) = \sum_{\substack{T \subset
    \widetilde{G_{a_2,\dots,a_r}}\\ \ka(T) = 0}}\ \prod_{2 \leq i
  \leq r} [\delta_T(i)]_q,\]
where $\delta_T(i) = \sum_j a_{\max(\text{parent}(i),j)}$. Note that $\widetilde{G_{a_2,\dots,a_r}}$ is the
complete graph $K_r$, therefore its spanning trees $T$ with $\ka(T) =
0$ are precisely increasing trees. Thus, for any descendant of $i$ one has
$\max(\text{parent}(i),j) = j$, which
finishes the proof.
\end{proof}

\begin{remark}
In this section we assumed that a graph $G$ is connected. Typically when $G$ is not connected the
corresponding Tutte polynomial is defined as the product of Tutte
polynomials of each connected component.
However, for our purposes we extend the definition of the Tutte
polynomial to non-connected graphs by setting its value to
$0$, which agrees with the idea that this is a weighted generating function
of spanning trees of $G$.
\end{remark}

We finish this section by an important lemma which links Tutte polynomials with cumulants. This lemma can
be also found in \cite[Proposition 4.1]{Josuat-Verges2013}, but our
proof differs from the one of Josuat-Verges.

\begin{lemma}
\label{lem:TutteCumulant}
Let $G = (V,E)$ be a graph. Then
\[ \Tu_G(1,q) = (q-1)^{1-\#V} \sum_{\substack{\pi \in \PPP(V) } } 
            (-1)^{\#\pi-1}(\#\pi-1)!\ \prod_{B \in \pi} q^{\#E|_B},\]
where $E|_B$ denotes the subset of $E$ consisting of the edges with
both endpoints from the set $B \subset V$.
\end{lemma}

\begin{proof}
Let $G$ be a graph (possibly disconnected), and we define two
generating functions
\[ nc_G(q) = \sum_{H \subset G} q^{\#E(H)},\]
where we sum over all (possibly disconnected) sub-graphs of $G$ and 
\[ c_G(q) = \sum_{H \subset G} q^{\#E(H)},\]
where we sum over all connected sub-graphs of $G$.
Then, clearly
\[ nc_G(q) = \sum_{\pi \in \PPP(V)}\prod_{B \in \pi}c_{G|_B}(q).\]
Thus, the M\"obius inversion formula
(\eqref{eq:DefCumulants}--\eqref{eq:DefMoments}) implies that
\[ c_G(q) = \sum_{\pi \in \PPP(V)}(-1)^{\#\pi-1}(\#\pi-1)!\ \prod_{B
    \in \pi}nc_{G|_B}(q).\]
Plugging $nc_{G|_B}(q) = (1+q)^{\#E|_B}$ and \eqref{eq:connect} into
the above equality yields the desired result.
\end{proof}

\subsection{$G$-parking functions and the abelian sandpile model}
\label{subsec:GParking}

The polynomial $\I_G(q)$ is also a generating function of two other objects
of interest: $G$-parking
functions, and recurrent
configurations in an abelian sandpile model on $G$.

Let $G = (V,E)$ be a graph
with the set of vertices $V = [r]$, where $r\geq 1$ is a positive
integer and we denote the root of $G$ by $v \in [r]$. For
any $i \in U \subset [r]\setminus\{v\}$ we define the \emph{outdegree}
$\outdeg_U(i)$ of a vertex $i$ as the number of edges in $G$ linking $i$
with some vertex $j \notin U$. We call a function $f: [r]\setminus\{v\} \to \N$ a \emph{$G$-parking function} if for any nonempty subset $U \subset [r]\setminus\{v\}$ there exists $i \in U$
such that $f(i) < \outdeg_U(i)$. For example, when $G = K_{r}$ is the complete graph on $[r]$, then the set of $G$-parking functions is
precisely the set of parking functions. 

Postnikov and Shapiro noticed that $G$-parking functions are directly
related to \emph{recurrent configurations}
in the \emph{abelian sandpile model for $G$}, which is a model where we are
trying to distribute chips among vertices of our graph. A function $u : [r]\setminus\{v\} \to
\N$ giving the number of chips placed in vertices of $G$ different
from the root is called a \emph{configuration}. We say that a vertex $i \in [r]\setminus\{v\}$ is \emph{unstable} if $u(i)
\geq \deg(i)$ -- if this is a case, this vertex can \emph{topple} by
sending chips to adjacent vertices one along each incident
edge. We say that a configuration is \emph{stable} if all the
vertices $i \in [r]\setminus\{v\}$ except the root are stable. For the root we set
$u(v) = -\sum_{i \in [r]\setminus\{v\}}u(i)$, and the root can always topple. Finally, we say that a
configuration $u$ is \emph{recurrent} if there exists a nontrivial
configuration $u' \neq 0$ such that $u$ can be obtained from $u+u'$ by a sequence
of topplings. Postnikov and Shapiro noticed that a configuration $u$ is
recurrent if and only if $f:[r]\setminus\{v\}\to\N$ defined by $f(i):=
\deg(i)-u(i)-1$ is a $G$-parking function. 
We define a \emph{weight} of a $G$-parking function $f$:
\[ \wt(f) := \#E-(r-1)-\sum_{i \in [r]\setminus\{v\}}f(i),\]
and we define a $q$-generating function of $G$-parking functions $P_G(q) := \sum_{f}q^{\wt(f)}$ with
respect to their weights. We can also interpret $P_G(q)$ as the generating function of recurrent
configurations on $G$ with respect to their \emph{level}, where
\[ \level(u) := \sum_{i \in [r]\setminus\{v\}}u(i)+\deg(0)-\#E.\]

Merino L\'opez proved \cite{MerinoLopez1997} that $P_G(q) = \Tu_G(1,q)
= \I_G(q)$, therefore we have two additional interpretations of the $G$-inversion polynomial
$\I(q)$.

\section{Preliminaries on symmetric functions and Young diagrams}
\label{sec:preliminaries}

\subsection{Partitions and Young diagrams}
\label{SubsecPartitions}
We call $\la := (\la_1,\la_2,\ldots,\la_l)$ \emph{a composition} of $n$
if it is a sequence of nonnegative
integers such that $\la_1+\la_2+\cdots+\la_l = n$ and $\la_l > 0$. If $\la$
is a weakly decreasing sequence we call it \emph{a partition} of $n$.
Then $n$ is called {\em the size} of $\lambda$ and $l$ is {\em its length}.
We use the notation $\la \vdash n$, or $|\la| = n$ to indicate its
size, and $\ell(\la) = l$ for its length.

There exists a canonical involution on the set of partitions which associates with a partition $\la$ its \emph{conjugate partition} $\la^t$.
By definition, the $j$-th part $\la_j^t$ of the conjugate partition
is the number of positive integers $i$ such that $\la_i \ge j$.
We define the partial order called \emph{dominance order} on the set
 of partitions of the same size as follows:
\[ \la \geq \mu \iff \sum_{1 \leq i \leq j} \la_i  \geq \sum_{1 \leq i\leq j} \mu_i \text{ for any positive integer } j.\]

\vspace{10pt}

A partition $\la$ is identified with some geometric object, called \emph{Young diagram},
defined by:
\[ \la = \{(i, j):1 \leq i \leq \la_j, 1 \leq j \leq \ell(\la) \}.\]
 For any
 box $\square := (i,j) \in \la$ from a Young diagram we define its
 \emph{arm-length} by $a_\la(\square) := \la_j-i$, its
 \emph{coarm-length} by $a'_\la(\square) := i-1$, its
 \emph{leg-length} by $\ell_\la(\square) := \la_i^t-j$ and its \emph{coleg-length} by $\ell_\la'(\square) := j-1$ (the same definitions as in \cite[Chapter I]{Macdonald1995}), see \cref{fig:ArmLeg}.

    \begin{figure}[t]
        \[
    \begin{array}{c}
    \Yfrench
    \Yboxdim{1cm}
    \begin{tikzpicture}[scale=0.75]
        \newcommand\yg{\Yfillcolour{gray!40}}
        \newcommand\yw{\Yfillcolour{white}}
        \newcommand\tralala{(i,j)}
        \newcommand\arml{a_\la(\square)}
        \newcommand\legl{\ell_\la(\square)}
        \newcommand\armlp{a_\la'(\square)}
        \newcommand\leglp{\ell_\la'(\square)}
        \tgyoung(0cm,0cm,;;;|3;;;;;;;;,;;;:;;;;;;;,;;;:;;;;;,_3;_4,;;;|3;;;,;;;:;;,;;;:)
       \Yfillopacity{0}
       \Ylinethick{0.3pt}
       \Yfillopacity{1}
       \Yfillcolour{gray!40}
       \tgyoung(4cm,3cm,_4\arml)
       \tgyoung(3cm,4cm,|3\legl)
       \tgyoung(0cm,3cm,_3\armlp)
       \tgyoung(3cm,0cm,|3\leglp)
       \Yfillopacity{0}
       \Ylinethick{2pt}
       \tgyoung(3cm,3cm,\square)
       \draw[->](3.5,5)--(3.5,4.1);
       \draw[->](3.5,2)--(3.5,2.9);
       \draw[->](3.5,1)--(3.5,0.1);
       \draw[->](3.5,6)--(3.5,6.9);
       \draw[->](0.8,3.5)--(0.1,3.5);
       \draw[->](2.2,3.5)--(2.9,3.5);
       \draw[->](5.2,3.5)--(4.1,3.5);
       \draw[->](6.8,3.5)--(7.9,3.5);
       \end{tikzpicture}
    \end{array}
        \]
     \caption{Arm-, coarm-, leg- and coleg-lengths of a box in a Young diagram.}
     \label{fig:ArmLeg}
    \end{figure}

\subsection{Macdonald polynomials and plethysm}
\label{subsect:Macdonald}

Let $p_i(\xx)$ be the \emph{power-sum} symmetric function, that is
\[ p_i(\xx) := \sum_{j \geq 1} x_j^i.\]
For any formal power series $A$ in the indeterminates $q,t,\xx$, we define
the \emph{plethystic
substitution} $p_i[A]$ as the result of substituting $a^i$
for each
indeterminate $a$ appearing in $A$. We extend this definition to any
symmetric function $f \in \Lambda$ by expanding it in the power-sum basis, and
then applying the plethystic
substitution as above,~i.e.
\[ f[A] := \sum_{\la}c_\la\ p_\la[A],\] 
where $f(\xx) = \sum_\la c_\la\ p_\la(\xx)$ and $p_\la[A] := \prod_i p_{\la_i}[A]$. 

Similarly as above, we define a
symmetric function $\omega p_\lambda(\xx) :=
(-1)^{|\la|+\ell(\la)}p_\la(\xx)$ and we extend the action of
$\omega$ on $\Lambda$ by linearity. We make a convention that
a bolded capital letter denotes the sum of countably many
indeterminates indexed by positive integers, for example $\XX :=
x_1 + x_2 + \cdots$.
Note that if $f$ is a homogeneous symmetric function of degree $n$ then
\begin{equation} 
\label{eq:pleth}
f[-\XX] = (-1)^n \omega f(\xx).
\end{equation}

There exists a scalar product on $\Lambda$,
called the \emph{Hall scalar product} which is defined on the Schur
basis $\{s_\la\}_\la$ by making it the orthonormal basis.

It turned out that there exists a unique family
$\{\tilde{H}_\mu(\xx;q,t)\}_\mu$ of the symmetric functions, indexed by partitions, which fulfills the
following conditions:

\begin{enumerate}[label=(C\arabic*)]
\item
\label{item1} 
$\tilde{H}_\mu[\XX(q-1);q,t] \in \QQ(q,t)\{s_\la\}_{\la \leq \mu^t}$,
\item
\label{item2}
$\tilde{H}_\mu[\XX(t-1);q,t] \in \QQ(q,t)\{s_\la\}_{\la \leq \mu}$,
\item
\label{item3}
$\langle \tilde{H}_\mu, s_{(|\mu|)}\rangle = 1$.
\end{enumerate}
The
elements of the above family are called the \emph{Macdonald
  polynomials}, and their characterization by conditions \ref{item1}--\ref{item3}
is equivalent to the characterization proved by Macdonald
\cite{Macdonald1995} (Macdonald used a different normalization; for
the proof of this equivalence see \cite[Proposition 2.6]{Haiman1999}).

\subsection{Fillings of Young diagrams}
\label{subsec:Fillings}

For any partition $\la \vdash n$ let $\si:\lambda \to \N_+$ be a filling of the boxes of
the diagram $\la$ by positive integers. A \emph{descent} of $\si$ is a
pair of entries $\si(\square) > \si(\square')$ such that $\square$ lies immediately
above $\square'$, that is $\square' = (i, j)$ and $\square = (i, j+1)$ for
some positive integers $i,j$. We define the \emph{set of
  descents} as follows:
\[ \Des(\si) := \{\square \in \la: \si(\square) > \si(\square') \text{ is a descent} \}.\]
The \emph{major index} $\maj(\si)$ of a filling
$\si$ is defined as:
\begin{equation}
\label{eq:major}
\maj(\si) := \sum_{\square \in \Des(\si)}(\ell_\la(\square)+1).
\end{equation}

    \begin{figure}[t]
        \[
          \begin{array}{c}
        \YFrench
    \Yboxdim{1cm}
    \begin{tikzpicture}[scale=0.7]
        \newcommand\yb{\Yfillcolour{cyan!70}}
        \newcommand\yr{\Yfillcolour{BrickRed}}
        \newcommand\yw{\Yfillcolour{white}}
        \newcommand\yg{\Yfillcolour{gray!20}}
        \newcommand\eleven{11}
        \tgyoung(0cm,0cm,12<10>\eleven246<12><14>\eleven
        <13>,!\yg 24!\yw 31!\yg 8<10>7!\yw 899,!\yg 674<10>9!\yw 9!\yg
        <13><13>,!\yw 111!\yg \eleven!\yw 444,!\yg 934!\yw 9)
       \Ylinethick{1.5pt}
        \draw[gray, thick] (0.5,4.5) to (1.5,4.5);
        \draw[gray, thick] (0.5,4.5) to [bend left] (2.5,4.5);
        \draw[gray, thick] (0.5,3.5) to [bend left] (3.5,3.5);
        \draw[gray, thick] (1.5,3.5) to [bend left] (3.5,3.5);
        \draw[gray, thick] (2.5,3.5) to (3.5,3.5);
        \draw[gray, thick] (0.5,3.5) to [bend left] (3.5,3.5);
        \draw[gray, thick] (0.5,2.5) to [bend left] (2.5,2.5);
        \draw[gray, thick] (3.5,2.5) to (4.5,2.5);
        \draw[gray, thick] (3.5,2.5) to [bend left] (5.5,2.5);
        \draw[gray, thick] (1.5,1.5) to (2.5,1.5);
        \draw[gray, thick] (2.5,1.5) to (3.5,1.5);
        \draw[gray, thick] (4.5,1.5) to [bend left]  (6.5,1.5);
        \draw[gray, thick] (5.5,1.5) to (6.5,1.5);
        \draw[gray, thick] (5.5,1.5) to [bend left] (7.5,1.5);
        \draw[gray, thick] (5.5,1.5) to [bend left] (8.5,1.5);
        \draw[gray, thick] (5.5,1.5) to [bend left] (9.5,1.5);
        \draw[gray, thick] (2.5,0.5) to [bend left] (4.5,0.5);
        \draw[gray, thick] (2.5,0.5) to [bend left] (5.5,0.5);
        \draw[gray, thick] (2.5,0.5) to [bend left] (6.5,0.5);
        \draw[gray, thick] (3.5,0.5) to (4.5,0.5);
        \draw[gray, thick] (3.5,0.5) to [bend left] (5.5,0.5);
        \draw[gray, thick] (3.5,0.5) to [bend left] (6.5,0.5);
        \draw[gray, thick] (7.5,0.5) to [bend left] (9.5,0.5);
        \draw[gray, thick] (8.5,0.5) to (9.5,0.5);
        \draw[gray, thick] (8.5,0.5) to [bend left] (10.5,0.5);

       \end{tikzpicture}
\end{array}
        \]
        \caption{Inversion triples in the above filling $\si$ are indicated by
          gray lines, while the set of descents is highlighted in light
          gray.}
        \label{fig:LegArm}
    \end{figure}

The second statistic that is of great importance in this paper is
a certain generalization of inversions in a permutation. First, we say
that two boxes $\square,\square' \in \la$ \emph{attack each other} if
either
\begin{itemize}
\item
they are in the same row: $\square = (i,j), \square' = (k,j)$, or;
\item
they are in consecutive rows, with the box in the upper row strictly to the right of
the one in the lower row: $\square = (i, j+1), \square' = (k, j)$, where $i > k$.
\end{itemize} 

The \emph{reading order} is the linear ordering of the entries of $\la$ given by reading them row by row,
top to bottom, and left to right within each row. We associate to a filling $\si$  its \emph{reading word} $w_\si$ by reading
its entries in the reading order. An \emph{inversion} of $\si$ is a pair of entries
$\si(\square) > \si(\square') $, where $\square,\square'$ attack each
other, and $\square$ precedes $\square'$ in the reading order.

We say that the ordered triple of boxes
$\square_1,\square_2,\square_3$ is \emph{counterclockwise
  increasing}, if one of the following conditions holds
true:
\begin{itemize}
\item
$\si(\square_1) \leq \si(\square_3) < \si(\square_2)$, or
\item
$\si(\square_3) < \si(\square_2) < \si(\square_1)$, or
\item
$\si(\square_2) < \si(\square_1) \leq \si(\square_3)$.
\end{itemize} 
We define the \emph{inversion triple} as a pair of boxes $(\square_1,\square_2)$, where $\square_1$ is a box lying in the same
row as $\square_2$ to its left and such that a triple $\square_1,\square_2,\square_3$ is  \emph{counterclockwise
  increasing}, where $\square_3$ is a box lying directly below $\square_1$. Here, the convention is that for $\square_1,\square_2$ lying in the
first row $\si(\square_3) < \min_{\square \in \la}{\si(\square)}$.
The set of inversion triples of $\si$ is denoted by
$\InvP(\si)$. \cref{fig:LegArm} presents an example of above
defined objects.

\begin{remark}
Note that an inversion triple $(\square_1,\square_2)$ is defined a
priori as \emph{a
pair} of boxes, not as \emph{a triple}. However, this pair uniquely determines a
counterclockwise triple from the
definition, and we decided to pick a name \emph{triple} to avoid a
confusion with an inversion $\si(\square) > \si(\square')$, which is
also a pair (of entries).
\end{remark}

We define
\begin{equation}
\label{eq:inv}
\inv(\si) := \#\Inv(\si)-\sum_{\square \in \Des(\si)}a_\la(\square) = \#\InvP(\si),
\end{equation}
where the second equality was shown in \cite{HaglundHaimanLoehr2005}.

It turned out that the statistics $\maj$, and $\inv$ can be used to describe the combinatorics of the Macdonald
polynomials, by the following explicit combinatorial formula, which
from now on we
treat as the definition of Macdonald polynomials:

\begin{theorem}{\cite{HaglundHaimanLoehr2005}}
\label{theo:Haglund}
\begin{equation}
\label{eq:Haglund}
\tilde{H}_{\la}(\bm{x};q,t) = \sum_{\si:\la\to\N_+}q^{\inv(\si)}\
t^{\maj(\si)}\ \xx^\si.
\end{equation}
\end{theorem}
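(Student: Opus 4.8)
The plan is to recognize that, because the excerpt adopts conditions \ref{item1}--\ref{item3} as the definition of $\tilde{H}_\la$ and recalls in \cref{subsect:Macdonald} that these three conditions determine it uniquely, it suffices to verify that the right-hand side of \eqref{eq:Haglund} satisfies them. Write $C_\la := \sum_{\si:\la\to\N_+} q^{\inv(\si)}\,t^{\maj(\si)}\,\xx^\si$ for that sum. Since \ref{item1}--\ref{item3} presuppose a symmetric function, the argument splits into three tasks: (a) proving $C_\la \in \Lambda$ is symmetric; (b) the normalization \ref{item3}; and (c) the two triangularity conditions \ref{item1} and \ref{item2}. Task (b) is immediate: using $s_{(n)} = h_n$ and the duality $\langle h_\rho, m_\nu\rangle = \delta_{\rho,\nu}$ for the Hall product, $\langle C_\la, s_{(|\la|)}\rangle$ equals the coefficient of $x_1^{|\la|}$ in $C_\la$; the only filling contributing this monomial is the constant filling $\si\equiv 1$, which has $\Des(\si)=\emptyset$ and $\InvP(\si)=\emptyset$, hence $\maj(\si)=\inv(\si)=0$ and the coefficient is $1$.

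For task (a) the cleanest route is to read $C_\la$ diagonal by diagonal: fixing $\Des(\si)$ fixes the $t$-power via \eqref{eq:major}, and grouping the columns of $\la$ into ribbons turns the remaining $q$-weighted sum over the attacking structure into an LLT polynomial of Lascoux--Leclerc--Thibon attached to a tuple of ribbons. As LLT polynomials are symmetric, each graded piece is symmetric and hence so is $C_\la$. Alternatively one proves symmetry directly by constructing, for each $i$, a statistic-preserving involution on fillings that swaps the number of entries equal to $i$ and to $i+1$ (a Bender--Knuth involution adapted to $\inv$ and $\maj$); the delicate point there is that, by the definition of attacking boxes, $\inv$ couples boxes in consecutive rows, so the involution must act on maximal runs of $\{i,i+1\}$-entries in the reading word rather than column by column.

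Task (c) is the main obstacle and the combinatorial heart of \cite{HaglundHaimanLoehr2005}. Recall that $\XX \mapsto \XX(t-1)$ acts diagonally on power sums by $p_k \mapsto (t^k-1)p_k$, which makes the substitution amenable to a combinatorial model. The plan is to encode $C_\la[\XX(t-1);q,t]$ as a generating function over fillings of $\la$ by an augmented alphabet $1 < \bar 1 < 2 < \bar 2 < \cdots$, where each barred letter carries the sign $-1$ together with the extra $t$-power forced by the plethysm, and then to build a sign-reversing, $q,t$-weight-preserving involution on these signed fillings. A direct inspection of the fixed points shows that each surviving filling has content $\nu$ with $\nu \le \la$ in dominance order, which is exactly \ref{item2}. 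Condition \ref{item1} follows from the parallel argument applied to $\XX(q-1)$, in which the roles of rows and columns — equivalently of $\inv$ and $\maj$, and of arm- and leg-lengths — are exchanged, and this exchange is precisely what turns the bound into $\nu \le \la^t$. I expect the construction of these two involutions and the proof that their fixed points are dominance-bounded to be the genuinely hard step; once it is in place, combining (a), (b), and (c) with the uniqueness of the solution to \ref{item1}--\ref{item3} yields $C_\la = \tilde{H}_\la$ and hence \eqref{eq:Haglund}.
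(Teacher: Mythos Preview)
The paper does not give its own proof of this theorem: it is stated as a cited result from \cite{HaglundHaimanLoehr2005}, and the introduction explicitly says that ``from now on we treat the celebrated Haglund's formula \eqref{eq:Haglund} as the definition of Macdonald polynomials.'' So there is no proof in the paper to compare your proposal against.

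That said, your outline is the standard route taken in \cite{HaglundHaimanLoehr2005}: reduce to checking \ref{item1}--\ref{item3}, get symmetry via the LLT decomposition, get the normalization \ref{item3} from the constant filling, and obtain the two triangularities via sign-reversing involutions on superfillings. Your description of (b) is correct. For (a), the LLT argument is exactly the one used in \cite{HaglundHaimanLoehr2005}; the Bender--Knuth alternative you mention is possible but not needed. For (c), your sketch is in the right spirit but is only a plan: you have not actually built the involutions, and the details matter. In the $(t-1)$ case the fixed points are the \emph{compatible} superfillings (those with $|\si(x,y)|\ge y$, cf.\ the proof of \eqref{eq:cumulantPlethT} in this paper, which invokes \cite[Lemma~5.2]{HaglundHaimanLoehr2005}); the dominance bound $\nu\le\la$ on contents then follows because row $y$ contains only letters of absolute value $\ge y$. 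The $(q-1)$ case in \cite{HaglundHaimanLoehr2005} is not obtained by a formal ``row/column swap'' as you suggest, but by a separate (and somewhat more involved) involution tailored to the attacking structure; phrasing it as a symmetry between $\inv$ and $\maj$ is heuristically right but does not by itself produce the argument. As written, your proposal correctly identifies the architecture of the proof but leaves the substantive constructions to the reader.
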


Finally, we say that a filling $\si: \la \to \N_+$ is \emph{standard}
if the set of entries corresponds to the set $[|\la|]$. Note that standard fillings of
$\la$ are in a natural bijection with permutations from $\Sym{n}$ given by the
correspondence $\si \leftrightarrow w_\si$, where $w_\si$ is the
reading word of $\si$.



\section{Coloring of the Young diagram $\la^{[r]}$ and graphs}
\label{sec:Coloring}

\subsection{Coloring of the Young diagram $\la^{[r]}$}
\label{subsect:Identification}

    \begin{figure}[t]
        \[
          \begin{array}{c}
        \newcommand\yb{\Yfillcolour{cyan!70}}
        \newcommand\yr{\Yfillcolour{BrickRed}}
        \newcommand\yw{\Yfillcolour{white}}
        \newcommand\eleven{11}
        \YFrench
        \young(12!\yb <10>!\yr \eleven!\yw 2!\yb 4!\yr 6<12>!\yw <14>!\yb \eleven!\yr
      <13>,!\yw 24!\yb 3!\yr 1!\yw 8!\yb <10>!\yr
      78!\yw 9!\yb 9,!\yw 67!\yb 4!\yr <10>!\yw 9!\yb 9!\yr <13><13>,!\yw
      11!\yb 1!\yr \eleven!\yw 4!\yb 4!\yr 4,!\yw 93!\yb 4!\yr 9)
        \vspace{2mm} \\
        \si = \si^{[3]}
    \end{array}
        \ \leftrightarrow\
        \begin{array}{c}
        	    \YFrench
            \young(122<14>,2489,679,114,93) \vspace{2mm} \\
        \si_1
    \end{array}\ \oplus\ \ 
    \begin{array}{c}
            \YFrench
            \Yfillcolour{cyan!70}
            \young(<10>4<11>,3<10>9,49,14,4) \vspace{2mm} \\
        \si_2
    \end{array}\ \oplus\ \ 
    \begin{array}{c}
            \YFrench
            \Yfillcolour{BrickRed}
            \young(<11>6<12><13>,178,<10><13><13>,<11>4,9) \vspace{2mm} \\
        \si_3
    \end{array}
    \vspace{-4mm}
        \]
        \caption{The diagram of an entry-wise sum of partitions.}
        \label{fig:LegArmSum}
    \end{figure}

Let $\la^1,\dots,\la^r$ be partitions, and let $\pi \in \PPP([r])$ be
a set partition. For each $B \in \pi$, we are going to color the
columns of $\la^B$ by numbers $b \in B$ as follows: we observe
that the Young diagram $\la^B$ can be constructed 
    by sorting the columns of the diagrams $\la^{b_1}$, \ldots,
    $\la^{b_t}$ in decreasing order, where $B=\{b_1,\dots,b_t\}$ and
    $b_1<\cdots<b_t$. When several columns have the same length, we
    use the total order of $B$,
    that is we put first the columns of $\la^{b_1}$,
    then those of $\la^{b_2}$ and so on. We say that a
    column of $\la^B$ is \emph{colored by $b \in B$} if this column
    is identified with the column of $\la^b$ in the above
    construction. Similarly, we say that a box $\square \in \la^B$ is
    \emph{colored by $b \in B$} if it lies in the column colored
    by $b$; see \cref{fig:LegArmSum}
    (at the moment, please disregard entries).
    This gives a way to identify boxes of $\la^{[r]}$ with the boxes of $\{\la^B:
    B \in \pi\}$. To be more precise a box $\square \in \la^B$ which
    lies in the $i$-th column colored by $b$ in $\la^B$ (not necessarily in the
    $i$-th column of $\la^B$) and in the $j$-th row of $\la^B$ is
    identified with the box $\tilde{\square}$ of $\la^{[r]}$ which
    lies in the $i$-th column colored by $b$ in $\la^{[r]}$ and in
    the $j$-th row of $\la^{[r]}$. 

This identification leads to a one-to-one correspondence between all the
fillings $\si$ of $\la^{[r]}$ with entries from a given set $\mathcal{A}$
and between the sets of fillings $\{\si^B: \la^B \to \A\ |\ B \in
\pi\}$. For a given set of fillings $\{\si_b: \la^b \to
\mathcal{A}\ |\ b \in B\}$ the corresponding filling $\si : \la^B \to
\mathcal{A}$ is denoted by $\si^B$, see \cref{fig:LegArmSum}. In particular, for any filling $\si = \si^{[r]}:
\la^{[r]} \to \N_+$, and for any set-partition $\pi \in \PPP([r])$ we have the following formula:
\begin{equation}
\label{eq:EqualityOfMaj}
\maj(\si) = \sum_{\square \in
  \Des(\si)}(\ell_{\la^{[r]}}(\square)+1)
= \sum_{B \in \pi}\sum_{\square \in
  \Des(\si^B)}(\ell_{\la^B}(\square)+1) = \sum_{B \in \pi}\maj(\si^B).
\end{equation}
Indeed, for any filling $\si : \la^B \to \N_+$ its descent set
$\Des(\si)$ decomposes as $\Des(\si) = \bigsqcup_{b \in B} \Des(\si_b)$,
and for any $\square
\in \la^B$ colored by $b$ one has
$\ell_{\la^b}'(\square) = \ell_{\la^B}'(\square) =
  \ell_{\la^{[r]}}'(\tilde{\square})$ and $\ell_{\la^b}(\square) = \ell_{\la^B}(\square) =
  \ell_{\la^{[r]}}(\tilde{\square})$.

The statistic $\inv$ is not
additive with respect to the operation $\oplus$ but its behaviour is
also very simple. Let $\InvP_1(\sigma)$ denote the set of triples
$(\square_1,\square_2) \in \InvP(\si)$ such that $\square_1,\square_2$
have the same color, and $\InvP_2(\sigma)$ denotes the set of triples
$(\square_1,\square_2) \in \InvP(\si)$ such that $\square_1,\square_2$
have different colors. Then, the set $\InvP(\sigma^B)$ of inversion triples
of the colored filling $\si^B$ decomposes as the disjoint sum
$\InvP(\sigma^B) = \InvP_1(\sigma^B)\sqcup \InvP_2(\sigma^B)$ and
\begin{equation}
\label{eq:InvIdentity}
\InvP_1(\sigma^B) = \bigsqcup_{i \in B}\InvP_1(\sigma^{\{i\}}),\quad
\InvP_2(\sigma^B) = \bigsqcup_{\{i,j\} \subset B}\InvP_2(\sigma^{\{i,j\}}).
\end{equation}

\begin{figure}[t]
\subfloat[]{
\label{subfig:A2}
    \YFrench
    \Yboxdim{1cm}
    \begin{tikzpicture}[scale=0.6, white/.style={circle,draw=black,fill=white,inner sep=1.5pt},
	blue/.style={circle,draw=black,fill=cyan!70,inner sep=1.5pt}, red/.style={circle,draw=black,fill=BrickRed,inner sep=1.5pt}]
        \newcommand\yb{\Yfillcolour{cyan!70}}
        \newcommand\yr{\Yfillcolour{BrickRed}}
        \newcommand\yw{\Yfillcolour{white}}
            
        \tgyoung(0cm,0cm,;;;;;;;;;;;,;;;;;;;;;;,;;;;;;;;,;;;;;;;,;;;;)

        \draw[yellow, thick] (0.5,4.5) to (1.5,4.5);
        \draw[gray, thick] (0.5,4.5) to [bend left] (2.5,4.5);
        \draw[gray, thick] (0.5,3.5) to [bend left] (3.5,3.5);
        \draw[gray, thick] (1.5,3.5) to [bend left] (3.5,3.5);
        \draw[gray, thick] (2.5,3.5) to (3.5,3.5);
        \draw[gray, thick] (0.5,3.5) to [bend left] (3.5,3.5);
        \draw[gray, thick] (0.5,2.5) to [bend left] (2.5,2.5);
        \draw[gray, thick] (3.5,2.5) to (4.5,2.5);
        \draw[gray, thick] (3.5,2.5) to [bend left] (5.5,2.5);
        \draw[gray, thick] (1.5,1.5) to (2.5,1.5);
        \draw[gray, thick] (2.5,1.5) to (3.5,1.5);
        \draw[gray, thick] (4.5,1.5) to [bend left]  (6.5,1.5);
        \draw[gray, thick] (5.5,1.5) to (6.5,1.5);
        \draw[gray, thick] (5.5,1.5) to [bend left] (7.5,1.5);
        \draw[gray, thick] (5.5,1.5) to [bend left] (8.5,1.5);
        \draw[yellow, thick] (5.5,1.5) to [bend left] (9.5,1.5);
        \draw[gray, thick] (2.5,0.5) to [bend left] (4.5,0.5);
        \draw[yellow, thick] (2.5,0.5) to [bend left] (5.5,0.5);
        \draw[gray, thick] (2.5,0.5) to [bend left] (6.5,0.5);
        \draw[gray, thick] (3.5,0.5) to (4.5,0.5);
        \draw[gray, thick] (3.5,0.5) to [bend left] (5.5,0.5);
        \draw[yellow, thick] (3.5,0.5) to [bend left] (6.5,0.5);
        \draw[gray, thick] (7.5,0.5) to [bend left] (9.5,0.5);
        \draw[gray, thick] (8.5,0.5) to (9.5,0.5);
        \draw[gray, thick] (8.5,0.5) to [bend left] (10.5,0.5);

		\draw (0.5,2.5) node [white] {};
		\draw (0.5,3.5) node [white] {};
		\draw (0.5,4.5) node [white] {};
		\draw (1.5,1.5) node [white] {};
		\draw (1.5,3.5) node [white] {};
		\draw (1.5,4.5) node [white] {};
		\draw (4.5,0.5) node [white] {};
		\draw (4.5,1.5) node [white] {};
		\draw (4.5,2.5) node [white] {};
		\draw (8.5,0.5) node [white] {};
		\draw (8.5,1.5) node [white] {};
		\draw (2.5,0.5) node [blue] {};
		\draw (2.5,1.5) node [blue] {};
		\draw (2.5,2.5) node [blue] {};
		\draw (2.5,3.5) node [blue] {};
		\draw (2.5,4.5) node [blue] {};
		\draw (5.5,0.5) node [blue] {};
		\draw (5.5,1.5) node [blue] {};
		\draw (5.5,2.5) node [blue] {};
		\draw (9.5,0.5) node [blue] {};
		\draw (9.5,1.5) node [blue] {};
		\draw (3.5,0.5) node [red] {};
		\draw (3.5,1.5) node [red] {};
		\draw (3.5,2.5) node [red] {};
		\draw (3.5,3.5) node [red] {};
		\draw (6.5,0.5) node [red] {};
		\draw (6.5,1.5) node [red] {};
		\draw (7.5,0.5) node [red] {};
		\draw (7.5,1.5) node [red] {};
		\draw (10.5,0.5) node [red] {};
\end{tikzpicture}}
\hspace{-4pt}
\subfloat[]{
\label{subfig:A3}
    \begin{tikzpicture}[scale=0.3, white/.style={circle,draw=black,fill=white,inner sep=3pt},
	blue/.style={circle,draw=black,fill=cyan!70,inner sep=3pt}, red/.style={circle,draw=black,fill=BrickRed,inner sep=3pt}]

		\draw (3,0) node (W) [white] {};
		\draw (0,5) node (B) [blue] {};
		\draw (6,5) node (R) [red] {};

                \draw[gray, thick] (W) to node[anchor=east] {$6$} (B);
                \draw[gray, thick] (W) to node[anchor=west] {$6$} (R);
                \draw[gray, thick] (B) to node[anchor=south] {$8$} (R);
                \draw[yellow, thick] (B) to 
                [out=120,in=160,looseness=12] (B);
                \draw[yellow, thick] (B) to
                [out=100,in=180,looseness=15] (B);
                \draw[yellow, thick] (R) to
                [out=0,in=80,looseness=15] (R);
                \draw[yellow, thick] (W) to [out=310,in=230,looseness=15] (W);

\end{tikzpicture}}
        \caption{The gray and yellow edges in \cref{subfig:A2} represent
          inversion triples in $\InvP_2(\si)$ and in $\InvP_1(\si)$,
          respectively, and colors of their 
          endpoints for $\si$ from
          \cref{fig:LegArmSum}. \cref{subfig:A3} presents a graph
          $G^\si_{\la^1,\la^2,\la^3}$ obtained from the edges from \cref{subfig:A2} by
          identifying the vertices of the same color. The labels on the edges of
          $G^\si_{\la^1,\la^2,\la^3}$ indicate their multiplicities.}
        \label{fig:Multigraph1}
    \end{figure}

Let $\sigma: \la^{[r]} \to \N_+$ be a filling. We are ready to
construct the
graph $G^\si_{\la^1,\dots,\la^r} := (V,E)$.  For each inversion triple in $\sigma$, we draw an edge
linking its boxes, and
we color its endpoints by the colors of these boxes
from the colored diagram $\la^{[r]}$; then we identify all the endpoints
of the same color -- see \cref{fig:Multigraph1} for a construction of
$G^\si_{\la^1,\dots,\la^r}$ for $r=3$ and $\si,\la^1,\la^2,\la^3$ as
in \cref{fig:LegArmSum}.
More formally, $G^\si_{\la^1,\dots,\la^r} := (V,E)$ is defined by the
following data:
\begin{enumerate}[label=(G\arabic*)]
\item
\label{eq:graph1a}
the set of vertices $V$ is equal to $[r]$;
\item
\label{eq:graph1b}
$e_{i,j}(G^\si_{\la^1,\dots,\la^r}) = \begin{cases}
  \#\InvP_1(\si^{\{i\}}) &\text{ for $i=j$},\\ \#\InvP_2(\si^{\{i,j\}})
  &\text{ for $i\neq j$.}\end{cases}$
\end{enumerate}

\subsection{Proof of  \cref{theo:MacdonaldCumuFormula}}

We recall the formula \eqref{eq:MacdonaldCumuFormula} that we need to prove: 
\[ \ka(\la^1,\dots,\la^r) = \sum_{\si: \la^{[r]} \to
  \N_+}\I_{G^\si_{\la^1,\dots,\la^r}}(q)\ t^{\maj(\sigma)}\ \xx^\sigma.\]

\begin{proof}[Proof of  \cref{theo:MacdonaldCumuFormula}]
Using definition of Macdonald
cumulants (\eqref{eq:DefCumulants} and \eqref{eq:MacdoCumu}) and HHL's
formula \eqref{eq:Haglund}, we rewrite the left hand side of \eqref{eq:MacdonaldCumuFormula}
as follows:
\begin{multline*} 
(q-1)^{1-r}\sum_{\substack{\pi \in \PPP([r]) } } 
            (-1)^{\#\pi-1}(\#\pi-1)! \prod_{B \in \pi}
            \tilde{H}_{\la^B}(\bm{x};q,t) \\
= (q-1)^{1-r}\sum_{\substack{\pi \in \PPP([r]) } } 
            (-1)^{\#\pi-1}(\#\pi-1)! \sum_{\si:\la^{[r]}\to \N_+\ }\prod_{B \in \pi}
            q^{\inv(\si^B)}\ t^{\maj(\si^B)}\ \xx^{\si_B}\\
= \sum_{\si:\la^{[r]}\to \N_+\ }t^{\maj(\si)}\left((q-1)^{1-r}\sum_{\substack{\pi \in \PPP([r]) } } 
            (-1)^{\#\pi-1}(\#\pi-1)! \prod_{B \in \pi}
            q^{\inv(\si^B)}\right)\xx^{\si}.
\end{multline*}
The first equality is a consequence of the one-to-one correspondence between
fillings of a given diagram and the sets of fillings of its
subdiagrams described in \cref{subsect:Identification}, while the last equality follows from
\eqref{eq:EqualityOfMaj}. The expression in parentheses is given by the following formula:
\[ (q-1)^{1-\#V} \sum_{\substack{\pi \in \PPP(V) } } 
            (-1)^{\#\pi-1}(\#\pi-1)! \prod_{B \in \pi} q^{\#E|_B},\]
where $(V,E)  = G^\si_{\la^1,\dots,\la^r}$,
which is equal to $\I_{G^\si_{\la^1,\dots,\la^r}}(q)$ by \cref{lem:TutteCumulant} and \eqref{eq:wow}. Indeed, strictly from the
definition \eqref{eq:graph1a}--\eqref{eq:graph1b} of $G^\si_{\la^1,\dots,\la^r} $ one has $V = [r]$, and
\[ \#E|_B = \sum_{i \in B}\#\InvP_1(\sigma^{\{i\}}) +
  \sum_{\{i,j\} \subset B}\#\InvP_2(\sigma^{\{i,j\}}) = \#\InvP_1(\sigma^B) + \#\InvP_2(\sigma^B) = \inv(\si^B),\]
where the second equality is given by
\eqref{eq:InvIdentity}. This concludes the proof.
\end{proof}

\section{Fundamental quasisymmetric function expansion}
\label{sec:Fundamental}

In this section we are going to find a formula for Macdonald cumulants
in terms of fundamental quasisymmetric functions and their
superization by applying the method from \cite[Section 4]{HaglundHaimanLoehr2005}.

\subsection{Fundamental quasisymmetric functions}

\begin{definition}[\cite{Gessel1984}] 
For any nonnegative integer $n$ and a subset $D \subset [n-1]$ a \emph{fundamental quasisymmetric
function} $F_{n,D}(\xx)$ of degree $n$ in variables $\xx = x_1,
x_2,\dots$ is defined by the formula
\[ F_{n,D}(\xx) := \sum_{\substack{i_1\leq \cdots \leq i_n\\j \in D \Rightarrow
    i_j < i_{j+1}}}x_{i_1}\cdots x_{i_n}.\]
\end{definition}

More generally, let $\A = \Z_+\cup\Z_- = \{1,\bar{1},2,\bar{2},\dots\}$
be a ``super'' alphabet of \emph{positive letters $i$} and \emph{negative letters
  $\bar{i}$}, and let $(\A,\leq)$ be a total order of $\A$ which
preserves the natural order of positive integers. The ``super''
quasisymmetric function $\tilde{F}_{n,D}(\xx,\yy)$ in variables $\xx =
x_1,x_2,\dots$ and $\yy = y_1,y_2,\dots$ is defined by
\[ \tilde{F}_{n,D}(\xx,\yy) := \sum_{\substack{i_1\leq \cdots \leq i_n\\
    i_j = i_{j+1} \in \Z_+ \Rightarrow j \notin D\\ i_j = i_{j+1} \in \Z_- \Rightarrow j \in D
}}z_{i_1}\cdots z_{i_n},\]
where the indices $i_1,\dots,i_n$ run over $\A$, and we set $z_i =
x_i$ for $i \in \Z_+$, and $z_{\bar{i}} = y_i$ for $\bar{i} \in \Z_-$.

\begin{definition}[\cite {HaglundHaimanLoehrRemmelUlyanov2005}] 
The \emph{superization} of a symmetric function $f(\xx)$ is
\[\tilde{f}(\xx, \yy) := \omega_{\YY} f[\XX + \YY]\]
(the subscript $\YY$ denotes that $\omega$ acts on $f[\XX + \YY] = f(\xx, \yy)$ considered as a symmetric
function of the $\yy$ variables only).
\end{definition}


\begin{proposition}[\cite{HaglundHaimanLoehrRemmelUlyanov2005}]
\label{prop:HHLRU}
Let $f(\xx)$ be a homogeneous symmetric function of degree $n$, written in
terms of fundamental quasisymmetric functions as
\[f(\xx) = \sum_{D}c_D\ F_{n,D}(\xx).\]
Then its ''superization'' is given by
\[ \tilde{f}(\xx, \yy) = \sum_{D}c_D\ \tilde{F}_{n,D}(\xx,\yy).\]
\end{proposition}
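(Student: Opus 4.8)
The plan is to strip off the ingredients of superization one at a time, reducing the proposition to a single combinatorial identity for each descent set $D$.

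First, superization $f\mapsto\widetilde f=\omega_\YY\, f[\XX+\YY]$ is $\QQ$-linear and the expansion $f=\sum_D c_D F_{n,D}(\xx)$ is unique, so it is enough to follow how the operation transforms the right-hand side while keeping track of symmetry. Since $p_k[\XX+\YY]=p_k(\xx)+p_k(\yy)$, one has $f[\XX+\YY]=f(\xx\sqcup\yy)$, the symmetric function $f$ evaluated on the union of the two alphabets; equivalently $f[\XX+\YY]$ is the image of $f$ under the coproduct $\Delta$ of $\Lambda$ (the first tensor factor carrying $\xx$, the second $\yy$). Now expand each $F_{n,D}$ on the two-block alphabet $x_1<x_2<\cdots<y_1<y_2<\cdots$ via the coproduct of $\mathrm{QSym}$: writing $D'_k:=D\cap[k-1]$ and $D''_k:=(D\cap\{k+1,\dots,n-1\})-k$ for the descent sets induced on blocks of sizes $k$ and $n-k$, one has $\Delta F_{n,D}=\sum_{k=0}^{n}F_{k,D'_k}\otimes F_{n-k,D''_k}$, hence
\[
f[\XX+\YY]=\sum_{D}c_D\sum_{k=0}^{n}F_{k,D'_k}(\xx)\,F_{n-k,D''_k}(\yy).
\]

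Next I would push $\omega_\YY$ through the sum. Let $\Omega\colon\mathrm{QSym}\to\mathrm{QSym}$ be the linear map $F_{m,E}\mapsto F_{m,[m-1]\setminus E}$. Using Gessel's expansion $s_\lambda=\sum_{T\in SYT(\lambda)}F_{|\lambda|,\Des(T)}$ together with $\Des(T^{t})=[|\lambda|-1]\setminus\Des(T)$, one sees that $\Omega$ restricted to $\Lambda$ is exactly $\omega$ (it sends $s_\lambda\mapsto s_{\lambda^t}$); this is the point that genuinely uses that $f$ is symmetric. Since the coproduct of $\mathrm{QSym}$ restricts on $\Lambda$ to that of $\Lambda$, applying $\mathrm{id}\otimes\Omega$ to $\Delta f$ computed in $\mathrm{QSym}$ agrees with applying $\mathrm{id}\otimes\omega$ to $\Delta f$ computed in $\Lambda$, and evaluating both at $(\xx,\yy)$ yields
\[
\widetilde f(\xx,\yy)=\sum_{D}c_D\sum_{k=0}^{n}F_{k,D'_k}(\xx)\,F_{n-k,\,[n-k-1]\setminus D''_k}(\yy).
\]
It then remains to prove, for each fixed $D$, the identity $\sum_{k=0}^{n}F_{k,D'_k}(\xx)\,F_{n-k,[n-k-1]\setminus D''_k}(\yy)=\widetilde F_{n,D}(\xx,\yy)$.

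For this I would just unpack \eqref{eq:superquasisymmetric}, choosing the admissible order of $\A$ that places every positive letter below every negative one (the general case follows from the order-independence of $\widetilde F_{n,D}$). A monomial of $\widetilde F_{n,D}$ is then a weakly increasing super-word $i_1\le\cdots\le i_n$ whose positive letters form a prefix — say of length $k$ — and whose negative letters form the complementary suffix. Among the positive letters, \eqref{eq:superquasisymmetric} forbids a tie exactly at the positions outside $D$, i.e. forces a strict increase exactly on $D\cap[k-1]=D'_k$, so this prefix contributes precisely to $F_{k,D'_k}(\xx)$; at the prefix/suffix boundary the two letters are automatically distinct, so position $k$ imposes nothing, matching the fact that the cut point is silent in the coproduct; among the negative letters \eqref{eq:superquasisymmetric} forces a strict increase exactly at the positions \emph{outside} $D$, i.e. on the complement of $D''_k$ in $\{1,\dots,n-k-1\}$, so this suffix contributes precisely to $F_{n-k,[n-k-1]\setminus D''_k}(\yy)$. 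Summing over the prefix length $k$ gives the displayed identity, and combining the three steps proves the proposition. The main obstacle is exactly this last bookkeeping: one must correctly match the two different tie-rules of \eqref{eq:superquasisymmetric} — a tie is \emph{forbidden} among positive letters but \emph{forced} among negative ones — against the asymmetry "$D'_k$ on the $\xx$-block versus its complement on the $\yy$-block" produced by the coproduct followed by $\omega_\YY$; the only other point requiring (routine) care is the verification that the descent-complementing map $\Omega$ restricts to $\omega$ on $\Lambda$.
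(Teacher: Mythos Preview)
The paper does not give its own proof of this proposition; it is quoted from \cite{HaglundHaimanLoehrRemmelUlyanov2005} without argument, so there is nothing in the paper to compare your approach against.

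Your argument is correct for the particular total order on $\A$ in which every positive letter precedes every negative letter: the coproduct computation in $\mathrm{QSym}$, the identification of the descent-complementing map $\Omega$ with $\omega$ on $\Lambda$, and the final matching with \eqref{eq:superquasisymmetric} all go through (modulo a slip of the pen: among positive letters a tie is forbidden at positions \emph{in} $D$, not ``outside $D$'', though your conclusion $F_{k,D'_k}(\xx)$ is the right one). The genuine gap is the parenthetical claim that ``the general case follows from the order-independence of $\widetilde F_{n,D}$''. Individual super quasisymmetric functions are \emph{not} order-independent: for instance $\widetilde F_{3,\{1\}}$ restricted to the two letters $1,\bar 1$ vanishes for the order $1<\bar 1$ but equals $x_1^2y_1+x_1y_1^2$ for the order $\bar 1<1$. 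What \emph{is} order-independent is the sum $\sum_D c_D\,\widetilde F_{n,D}$ when $\sum_D c_D F_{n,D}$ is symmetric --- but that is exactly the content of the proposition, so invoking it here is circular. This matters for the paper, which later uses other orders (e.g.\ $\bar 1<1$ in the proof of \cref{theo:HookKostka} and $1<2<\cdots<\bar 2<\bar 1$ in \cref{sec:FullyColored}).

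To close the gap you can either (i) prove directly that $\sum_D c_D\,\widetilde F_{n,D}$ is unchanged under swapping two adjacent letters of $\A$ whenever $f$ is symmetric, reducing to a local Bender--Knuth--type bijection, or (ii) replace the coproduct argument by the super-standardization bijection used in \cite{HaglundHaimanLoehr2005,HaglundHaimanLoehrRemmelUlyanov2005}, which works uniformly for every admissible order: one standardizes a super word $i_1\le\cdots\le i_n$ to a permutation $\sigma\in\Sym{n}$ by breaking ties left-to-right among positive letters and right-to-left among negative letters, and checks that the fibre over $\sigma$ contributes exactly $\widetilde F_{n,\iDes(\sigma)}(\xx,\yy)$.
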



\subsection{Fundamental quasisymmetric function expansion and super fillings}

For any pair of letters $x,y \in (\A,\leq)$ and for any sign $\bullet \in \{+,-\}$
we write $x \leq_\bullet y$ when $x < y$, or $x = y \in
\Z_\bullet$. We define $\geq_\bullet$ similarly.

Given a super alphabet $\A$, a \emph{super filling} of $\mu$ is a function
$\si: \mu \to \A$. 

We define the set $\Des(\si)$ of boxes $\square \in \mu$
occurring as the upper box in a \emph{descent}, that is in the pair
$\si(\square) \geq_- \si(\square')$, where $\square$ lies directly above $\square'$ in $\mu$.
The entries of an attacking pair $(\square,\square')$ such that $\si(\square) \geq_-
\si(\square')$ and such that $\square$ precedes $\square'$
in the reading order form an \emph{inversion}. The set of positions of
all inversions in $\si$ is denoted by $\Inv(\si)$, as before. 

We say that the ordered triple of boxes
$\square_1,\square_2,\square_3$ is \emph{counterclockwise
  increasing}, if one of the following conditions holds
true:
\begin{itemize}
\item
$\si(\square_1) \leq_+ \si(\square_3) \leq_- \si(\square_2)$, or
\item
$\si(\square_3) \leq_- \si(\square_2) \leq_- \si(\square_1)$, or
\item
$\si(\square_2) \leq_- \si(\square_1) \leq_+ \si(\square_3)$.
\end{itemize} 
We define the \emph{inversion triple} as a pair of boxes $(\square_1,\square_2)$, where $\square_1$ is a box lying in the same
row as $\square_2$ to its left and such that a triple $\square_1,\square_2,\square_3$ is  \emph{counterclockwise
  increasing}, where $\square_3$ is a box lying directly below $\square_1$.

The statistics $\inv(\si)$ and $\maj(\si)$ are defined in terms of
$\Inv(\si)$, $\Des(\si)$ and $\InvP(\si)$ by \eqref{eq:major}, and
\eqref{eq:inv} as for ordinary fillings. 
Given a permutation $\si \in \Sym{n}$ and an integer $i < n$, we say that $i$ is an \emph{inverse descent} of $\si$ if
$i+1$ lies to the left of $i$ in $\si$. Let $\iDes(\si)$ denote the
set of inverse descents of $\si$.

\vspace{10pt}

Let $\la^1,\dots,\la^r$ be partitions and let $n= |\la^1|+\cdots+|\la^r|$. Then, using formula
\eqref{eq:MacdonaldCumuFormula} and a verbatim argumentation as in
\cite[Section 4]{HaglundHaimanLoehr2005} we have the expansion
\begin{equation}
\label{eq:CumulantInQuasisymmetric}
\ka(\la^1,\dots,\la^r)(\xx) = \sum_{\si \in \Sym{n}}\I_{G^{\si}_{\la^1,\dots,\la^r}}(q)\
t^{\maj_{\la^{[r]}}(\si)}\ F_{n,\iDes(\si)}(\xx),
\end{equation}
where we abuse notation by denoting both a permutation by $\si$, and
the associated standard filling of
$\la^{[r]}$ with the reading word given by $\si$ (see \cref{subsec:Fillings}). Thus, by \cref{prop:HHLRU}
\[ \tilde{\ka}(\la^1,\dots,\la^r)(\xx,\yy) = \sum_{\si \in \Sym{n}}\I_{G^{\si}_{\la^1,\dots,\la^r}}(q)\
t^{\maj_{\la^{[r]}}(\si)}\ \tilde{F}_{n,\iDes(\si)}(\xx,\yy),\]
or equivalently (which will be more useful in applications)
\begin{equation}
\label{eq:CumulantInSuperQuasisymmetric}
\tilde{\ka}(\la^1,\dots,\la^r)(\xx,-\yy) =
\tilde{\ka}(\la^1,\dots,\la^r)[\XX-\YY] = \sum_{\si:\la^{[r]} \to \A}\I_{G^{\si}_{\la^1,\dots,\la^r}}(q)\
t^{\maj_{\la^{[r]}}(\si)}\ \zz^T,
\end{equation}
where we sum over all super fillings of $\la^{[r]}$, and  $z_i =
x_i$ for $i \in \Z_+$, and $z_{\bar{i}} = - y_i$ for $\bar{i} \in
\Z_-$. The first equality in \eqref{eq:CumulantInSuperQuasisymmetric}
follows from \eqref{eq:pleth}.

\section{Multivariate $q,t$-Kostka coefficients for hooks}
\label{sec:Kostka}

    \begin{figure}[t]
\[
          \begin{array}{c}
        \YFrench
    \Yboxdim{1cm}
    \begin{tikzpicture}[scale=0.5]
        \newcommand\yb{\Yfillcolour{cyan!70}}
        \newcommand\yr{\Yfillcolour{BrickRed}}
        \newcommand\yw{\Yfillcolour{white}}
        \newcommand\eleven{11}
        \tgyoung(0cm,0cm,;;!\yb ;!\yr ;!\yw ;!\yb ;!\yr ;;!\yw ;!\yb ;!\yr
      ;,!\yw ;;!\yb ;!\yr ;!\yw ;!\yb ;!\yr
      ;;!\yw ;!\yb ;,!\yw ;;!\yb ;!\yr ;!\yw ;!\yb ;!\yr ;;,!\yw
      ;;!\yb ;!\yr ;!\yw ;!\yb ;!\yr ;,!\yw ;;!\yb ;!\yr ;)
       \Ylinethick{1.5pt}
       \tgyoung(1cm,1cm,!\yw ;)
       \tgyoung(5cm,2cm,!\yb ;)
       \tgyoung(3cm,3cm,!\yr ;)
\end{tikzpicture}
        \vspace{2mm} \\
        \la^{[3]}
    \end{array}
\
        \begin{array}{c}
        \YFrench
    \Yboxdim{1cm}
    \begin{tikzpicture}[scale=0.5, white/.style={circle,draw=black,fill=white,inner sep=1.5pt},
	blue/.style={circle,draw=black,fill=cyan!70,inner sep=1.5pt}, red/.style={circle,draw=black,fill=BrickRed,inner sep=1.5pt}]
        \newcommand\yb{\Yfillcolour{cyan!70}}
        \newcommand\yr{\Yfillcolour{BrickRed}}
        \newcommand\yw{\Yfillcolour{white}}
            
        \tgyoung(0cm,0cm,;;;;;;;;;;;,;;;;;;;;;;,;;;;;;;;,;;;;;;;,;;;;)

        \draw[gray, thick] (0.5,3.5) to [bend left] (3.5,3.5);
        \draw[gray, thick] (1.5,3.5) to [bend left] (3.5,3.5);
        \draw[gray, thick] (2.5,3.5) to (3.5,3.5);
        \draw[gray, thick] (0.5,3.5) to [bend left] (3.5,3.5);
        \draw[gray, thick] (0.5,2.5) to [bend left] (5.5,2.5);
        \draw[gray, thick] (1.5,2.5) to [bend left] (5.5,2.5);
        \draw[gray, thick] (2.5,2.5) to [bend left] (5.5,2.5);
        \draw[gray, thick] (3.5,2.5) to [bend left] (5.5,2.5);
        \draw[gray, thick] (4.5,2.5) to (5.5,2.5);
        \draw[gray, thick] (0.5,1.5) to (1.5,1.5);

		\draw (0.5,2.5) node [white] {};
		\draw (1.5,2.5) node [white] {};
		\draw (0.5,3.5) node [white] {};
		\draw (1.5,1.5) node [white] {};
		\draw (0.5,1.5) node [white] {};
		\draw (1.5,3.5) node [white] {};
		\draw (4.5,2.5) node [white] {};
		\draw (2.5,2.5) node [blue] {};
		\draw (2.5,3.5) node [blue] {};
		\draw (5.5,2.5) node [blue] {};
		\draw (3.5,2.5) node [red] {};
		\draw (3.5,3.5) node [red] {};
\end{tikzpicture} \vspace{2mm} \\
\
\end{array}
        \begin{array}{c}
    \begin{tikzpicture}[scale=0.35, white/.style={circle,draw=black,fill=white,inner sep=3pt},
	blue/.style={circle,draw=black,fill=cyan!70,inner sep=3pt}, red/.style={circle,draw=black,fill=BrickRed,inner sep=3pt}]

		\draw (3,1) node (W) [white] {};
		\draw (0,6) node (B) [blue] {};
		\draw (6,6) node (R) [red] {};

                \draw[gray, thick] (W) to (B);
                \draw[gray, thick] (W) to [bend left=16] (B);
                \draw[gray, thick] (W) to [bend right=16] (B);
                \draw[gray, thick] (W) to [bend left=8] (R);
                \draw[gray, thick] (W) to [bend right=8] (R);
                \draw[gray, thick] (B) to [bend left=8] (R);
                \draw[gray, thick] (B) to [bend right=8] (R);
                \draw[gray, thick] (B) to
                [out=100,in=180,looseness=15] (B);
                \draw[gray, thick] (W) to
                [out=310,in=230,looseness=15] (W);

\end{tikzpicture} \\
        \ \ \ \ \G_{\la^1,\la^2,\la^3}^{\square_1,\square_2,\square_3}
    \end{array}
    \vspace{-2mm}
        \]
        \caption{The graph
          $G_{\la^1,\la^2,\la^3}^{\square_1,\square_2,\square_3}$ for
          $\la^1,\la^2,\la^3$ from \cref{fig:LegArmSum}.}
        \label{fig:graph}
    \end{figure}

Let $\la^1,\dots,\la^r$ be partitions, and let $1 \leq s \leq
|\la^1|+\cdots+|\la^r|$ be a positive integer. For any
subset $\{\square_1,\dots,\square_s\} \subset \la^{[r]}$ of boxes we
construct a graph $G^{\square_1,\dots,\square_s}_{\la^1,\dots,\la^r} :=
(V,E)$ as follows: we draw an edge between each $\square_i$ and each
box to its left lying in the same row, and
we color its endpoints by the colors of the corresponding boxes
in $\la^{[r]}$; then we identify all the endpoints of the same
color -- see \cref{fig:graph} for a construction of
$G^{\square_1,\square_2,\square_3}_{\la^1,\dots,\la^r}$ for $r=3$ and $\la^1,\la^2,\la^3$ as in
\cref{fig:LegArmSum}. In other words
\begin{itemize}
\item
the set of vertices $V$ is equal to $[r]$, 
\item
the number of edges linking vertices $i,j \in V$ is equal to 
the number of pairs $(\square_k,\square')$ such that $\square'$ is in the same row as $\square_k$ to its left, and the pair
$(\square_k,\square')$ is colored by $\{i,j\}$, where $1 \leq k \leq s$.
\end{itemize}

We are ready to prove \cref{conj:GeneralizedKostka} in the case of hooks.

\begin{theorem}
\label{theo:HookKostka}
Let $\la^1,\ldots,\la^r$ be partitions with
$|\la^{[r]}|=n$. Then, for any nonnegative integer $s$,
the coefficient of $(-u)^s$ in $\ka(\la^1,\dots,\la^r)[1-u]$ is equal
to
\begin{equation}
\label{eq:claim} 
\ka(\la^1,\dots,\la^r)[1-u]|_{(-u)^s} = \sum_{\{\square_1,\dots,\square_s\} \subset
  \la^{[r]}}\I_{G_{\la^1,\dots,\la^r}^{\square_1,\dots,\square_s}}(q)
\ t^{\sum_{1 \leq i
    \leq s}
    \ell'_{\la^{[r]}}(\square_i)}.
\end{equation}
Equivalently, the multivariate $q,t$-Kostka number
$\widetilde{K}_{(n-s,1^s); \la^1,\dots,\la^r}(q,t)$ is a polynomial in $q,t$ with
nonnegative integer coefficients given by the following formula:
\begin{equation}
\label{eq:HookKostka}
\widetilde{K}_{(n-s,1^s); \la^1,\dots,\la^r}(q,t)
=\sum_{\{\square_1,\dots,\square_s\} \subset
  \la^{[r]}\setminus(1,1)}\I_{G_{\la^1,\dots,\la^r}^{\square_1,\dots,\square_s}}(q)\
t^{\sum_{1 \leq i
    \leq s}
    \ell'_{\la^{[r]}}(\square_i)}.
\end{equation}
\end{theorem}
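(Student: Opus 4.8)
The plan is to apply the superization machinery from Section~\ref{sec:Fundamental}, specialized to evaluate the Schur expansion at hook shapes. Recall from \eqref{eq:przydatne} that $\tilde{f}(\xx,-\yy) = f[\XX-\YY]$, so setting $\XX \mapsto 1$ (one $x$-variable) and $\YY \mapsto u$ (one $y$-variable) gives $\ka(\la^1,\dots,\la^r)[1-u]$ as a single-variable evaluation of the superization \eqref{eq:CumulantInSuperQuasisymmetric}. First I would record the elementary fact that, for a symmetric function $f$ homogeneous of degree $n$ written as $f = \sum_{\la \vdash n} c_\la s_\la$, one has $f[1-u] = \sum_{s=0}^{n} c_{(n-s,1^s)} (-u)^s (\text{up to a sign}),$ because $s_\la[1-u]$ vanishes unless $\la$ is a hook, and $s_{(n-s,1^s)}[1-u] = (-u)^s(1-u)$ or similar — I would pin down the exact normalization via \eqref{eq:Gessel}, since a standard Young tableau of hook shape $(n-s,1^s)$ has a single inverse descent structure. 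This reduces the theorem to computing the coefficient of $(-u)^s$ in the single-variable super specialization.

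The heart of the argument is then to evaluate $\tilde{\ka}(\la^1,\dots,\la^r)(x_1; y_1)$ using \eqref{eq:CumulantInSuperQuasisymmetric}: we sum over super fillings $\si: \la^{[r]} \to \{1,\bar 1\}$ (only one positive and one negative letter survive), with the monomial $\zz^T$ contributing $x_1^{\#\{\square: \si(\square)=1\}} y_1^{\#\{\square: \si(\square)=\bar 1\}}$. Setting $x_1=1, y_1 = u$, the coefficient of $u^s$ collects exactly the super fillings with precisely $s$ boxes labeled $\bar 1$; call that set of boxes $\{\square_1,\dots,\square_s\}$. The key computation is to show that for such a filling, $\maj_{\la^{[r]}}(\si) = \sum_i \ell'_{\la^{[r]}}(\square_i)$ and $G^{\si}_{\la^1,\dots,\la^r} = G^{\square_1,\dots,\square_s}_{\la^1,\dots,\la^r}$. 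For the major index: a descent in a $\{1,\bar 1\}$-super filling occurs at box $\square$ (lying above $\square'$) precisely when $\si(\square) \geq_- \si(\square')$, and with only two letters ordered $1 < \bar 1$, a short case analysis shows the descent boxes are exactly those labeled $\bar 1$ — whence $\maj(\si) = \sum_{\square: \si(\square) = \bar 1}(\ell_{\la^{[r]}}(\square)+1)$. I would then need to reconcile $\ell_{\la^{[r]}}(\square)+1$ summed over the $\bar 1$-boxes with $\sum_i \ell'_{\la^{[r]}}(\square_i)$; this likely requires choosing the total order on the super alphabet appropriately (placing $\bar 1$ immediately after $1$, or the reverse) and possibly a complementation of the $\bar 1$-set within each column, which is the kind of subtlety I would watch for. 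For the graph: an inversion pair $(\square_1,\square_2)$ in the super sense with these two letters forces (by the counterclockwise-increasing conditions) that $\square_1$ is labeled $\bar 1$ and $\square_2$ labeled $1$ (or the mirror), i.e.\ exactly one of the two boxes in a same-row left-pair is a ``distinguished'' $\bar 1$-box — which matches verbatim the defining condition of $G^{\square_1,\dots,\square_s}_{\la^1,\dots,\la^r}$ (an edge between $\square_k$ and each box to its left in the same row).

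Once the identification $\si \leftrightarrow \{\square_1,\dots,\square_s\}$ with matching statistics is established, \eqref{eq:claim} follows immediately by substituting into \eqref{eq:CumulantInSuperQuasisymmetric}. For the second formula \eqref{eq:HookKostka}: the Schur function $s_{(n-s,1^s)}$ is related to the hook-shape coefficient with a correction because $s_\la[1-u]$ contains the extra factor $(1-u)$, which shifts the index — concretely, $\tilde{K}_{(n-s,1^s)}$ appears with a sign-alternating combination of two consecutive coefficients of $\ka[1-u]$, and I would telescope this. The cleaner route: the boxes $(1,1)$ plays a special role since it can never be one of the distinguished $\square_i$ in the strict hook coefficient (the corner box corresponds to the mandatory $(1-u)$ factor), so restricting the sum in \eqref{eq:claim} to $\{\square_1,\dots,\square_s\} \subset \la^{[r]}\setminus(1,1)$ and reindexing yields \eqref{eq:HookKostka}; I would verify this by noting $\ell'_{\la^{[r]}}((1,1)) = 0$ so removing it does not disturb the $t$-power, and its color is the root, so removing it does not disturb the graph either.

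The main obstacle I anticipate is the bookkeeping in matching $\maj(\si)$ with $\sum_i \ell'_{\la^{[r]}}(\square_i)$: the coleg-length $\ell'$ counts columns-to-the-left, whereas $\maj$ is built from leg-lengths $\ell$, so there is a genuine transpose-like discrepancy that must be absorbed by the freedom in ordering the super alphabet and by a careful column-by-column complementation argument (the $\bar 1$-boxes in a column of $\la^{[r]}$ must form an order ideal or filter for the filling to be ``super-semistandard-compatible,'' and the weight accounting flips which end of the column is charged). Getting this exactly right — including the sign and the role of the corner box $(1,1)$ — is where the real care is needed; everything else is a direct application of \eqref{eq:CumulantInSuperQuasisymmetric} and \eqref{eq:Gessel}.
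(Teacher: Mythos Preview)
Your overall strategy coincides with the paper's: specialize \eqref{eq:CumulantInSuperQuasisymmetric} to super fillings $\si:\la^{[r]}\to\{1,\bar 1\}$, identify each with its set of $\bar 1$-boxes, and use a column-reflection involution to pass from leg-lengths to coleg-lengths. You also correctly anticipate the role of the corner $(1,1)$ in the equivalence of \eqref{eq:claim} and \eqref{eq:HookKostka}.

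Two of your intermediate claims are wrong, however, and would block the argument as written. First, the alphabet ordering matters: the paper uses $\bar 1 < 1$, and only with that choice does the counterclockwise case-check yield the clean statement that $(\square_1,\square_2)$ is an inversion pair if and only if $\si(\square_2)=\bar 1$, \emph{regardless} of $\si(\square_1)$ and of the box below. Your claim that ``exactly one of the two boxes is $\bar 1$'' is incorrect under either ordering and, crucially, does not match the definition of $G^{\square_1,\dots,\square_s}_{\la^1,\dots,\la^r}$, which draws an edge from each $\square_k$ to \emph{every} box to its left in the same row --- including other $\bar 1$-boxes. (With your ordering $1<\bar 1$ the inversion-pair condition becomes row-dependent and the graph identification fails outright.) Second, with $\bar 1<1$ the descent boxes are those lying directly \emph{above} a $\bar 1$-box, not the $\bar 1$-boxes themselves; a $\bar 1$-box at $(x,y)$ makes $(x,y{+}1)$ a descent contributing $\ell((x,y{+}1))+1=\ell((x,y))$, so $\maj(\si)=\sum_i \ell_{\la^{[r]}}(\square_i)$ rather than $\sum_i(\ell(\square_i)+1)$. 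The column involution $(x,y)\mapsto(x,(\la^{[r]})^t_x+1-y)$ that you anticipate then swaps $\ell\leftrightarrow\ell'$ while preserving the graph (which depends only on the columns of the $\square_i$), and \eqref{eq:claim} follows.
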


\begin{proof}
Firstly, we sketch an argument of Macdonald showing the equivalence of
formulas \eqref{eq:claim} and \eqref{eq:HookKostka}.
Macdonald proved \cite[Section VI.8,
Example 2]{Macdonald1995} that
\[s_\la[1-u] = \begin{cases} 0 &\text{ if $\lambda$ is not a hook},\\
    (1-u)(-u)^s &\text{ if $\lambda = (n-s,1^s)$}.\end{cases}\]
Thus, for any homogeneous symmetric function
\[ f := \sum_{\lambda \vdash n} c_\la\ s_\la,\]
we obtain the following relation
\begin{equation} 
\label{eq:relation}
f[1-u]|_{(-u)^s} = c_{(n-s,1^s)} + c_{(n-s+1,1^{s-1})}.
\end{equation}
Note that for any subset $\{\square_1,\dots,\square_s\} \subset
  \la^{[r]}\setminus(1,1)$ graphs
  $G_{\la^1,\dots,\la^r}^{\square_1,\dots,\square_s}$ and $G_{\la^1,\dots,\la^r}^{\square_1,\dots,\square_s,(1,1)}$
coincide, which is clear from our construction (there are no boxes to the left of $(1,1)$). Moreover, $\ell'_{\la^{[r]}}((1,1)) =
0$, therefore
\begin{multline*}
\sum_{\{\square_1,\dots,\square_s\} \subset
  \la^{[r]}}\I_{G_{\la^1,\dots,\la^r}^{\square_1,\dots,\square_s}}(q)
\ t^{\sum_{1 \leq i
    \leq s}
    \ell'_{\la^{[r]}}(\square_i)} \\
= \sum_{\{\square_1,\dots,\square_s\} \subset
  \la^{[r]}\setminus(1,1)}\I_{G_{\la^1,\dots,\la^r}^{\square_1,\dots,\square_s}}(q)
\ t^{\sum_{1 \leq i
    \leq s}
    \ell'_{\la^{[r]}}(\square_i)} \\
+ \sum_{\{\square_1,\dots,\square_{s-1}\} \subset
  \la^{[r]}\setminus(1,1)}\I_{G_{\la^1,\dots,\la^r}^{\square_1,\dots,\square_{s-1}}}(q)
\ t^{\sum_{1 \leq i
    \leq s-1}
    \ell'_{\la^{[r]}}(\square_i)},
\end{multline*}
and we proved that the relation \eqref{eq:relation} is satisfied by
\eqref{eq:claim} and \eqref{eq:HookKostka}. This proves that formulas
\eqref{eq:claim} and \eqref{eq:HookKostka} are equivalent.

In order to compute the coefficient of $(-u)^s$ in
$\ka(\la^1,\dots,\la^r)[1-u]$ we use formula
\eqref{eq:CumulantInSuperQuasisymmetric}, which says that this coefficient is equal
to the sum of $\I_{G^{\si}_{\la^1,\dots,\la^r}}(q)\
t^{\maj_{\la^{[r]}}(\si)}$ over
super fillings $\si$ with $n-s$ entries equal to $1$ and $s$ entries
equal to $\bar{1}$. If we use an ordering of $\A$ in which
$\bar{1} < 1$ then the set of inversion triples
consists of the pairs $(\square_1,\square_2)$ such that
$\si(\square_2) = \bar{1}$ and $\si(\square_1)$ is arbitrary (and, as
a part of the definition, $\square_1$ is in the same row as
$\square_2$ to its left). Moreover, $\square \in
\Des(\si)$ if and only if $\si(\square') =
\bar{1}$ for $\square'$ strictly below $\square$. Let $\si$ be a
super filling of $\la^{[r]}$ with $s$
boxes
$\square_1,\dots,\square_s$ filled by $\bar{1}$ and other boxes filled
by $1$. It is obvious from the construction that $G^{\si}_{\la^1,\dots,\la^r} =
  G_{\la^1,\dots,\la^r}^{\square_1,\dots,\square_s}$.
Indeed, we recall that $G^{\si}_{\la^1,\dots,\la^r}$ is constructed by
replacing inversion
triples by edges and identifying vertices of the same color. Inversion triples in $\si$ are given by $(\square,\square_i)$,
where $1 \leq i \leq s$, and $\square$ is an arbitrary box in the same row as $\square_i$ to its left. Thus, the construction of both
$G^{\si}_{\la^1,\dots,\la^r}$ and
$G_{\la^1,\dots,\la^r}^{\square_1,\dots,\square_s}$ coincides.
Let $\si$ be a super filling of $\la^{[r]}$ with $s$
boxes
$\square_1,\dots,\square_s$ filled by $\bar{1}$ and other boxes filled
by $1$. We associate with it another super filling $\widehat{\si}$ of $\la^{[r]}$ with $s$
boxes
$\widehat{\square_1},\dots,\widehat{\square_s}$ filled by $\bar{1}$ and other boxes filled
by $1$, by setting $\widehat{(x,y)} := \left(x,(\la^{[r]})^t_x+1-y\right) \in
\la^{[r]}$. In other words $\widehat{(x,y)}$ is a box lying in the same column
as $(x,y)$, but in the
$y$-th row counting from the top of this
column. The operation $\widehat{}$ is an involution on the set of
super fillings of $\la^{[r]}$ with $s$ entries equal to $\bar{1}$ and $n-s$
entries equal to $1$. Note that $\ell_{\la^{[r]}}(\square) =
\ell'_{\la^{[r]}}(\widehat{\square})$, and $\maj_{\la^{[r]}}(\si) = \sum_{1 \leq i
    \leq s}
    \ell_{\la^{[r]}}(\square_i)$, so
\[ \maj_{\la^{[r]}}(\widehat{\si}) = \sum_{1 \leq i
    \leq s}
    \ell_{\la^{[r]}}(\widehat{\square_i}) = \sum_{1 \leq i
    \leq s}
    \ell'_{\la^{[r]}}(\square_i).\]
Finally, it is straightforward from the construction of
$G_{\la^1,\dots,\la^r}^{\square_1,\dots,\square_s}$ that for a fixed integer
$1 \leq i \leq s$ one can replace the box $\square_i$ by any other
box $\square'$ from the same column of $\la^{[r]}$ and the resulting
graph
$G_{\la^1,\dots,\la^r}^{\square_1,\dots,\square',\dots,\square_s}$ is
the same as the initial one
$G_{\la^1,\dots,\la^r}^{\square_1,\dots,\square_s}$. Indeed, for any
color $j \in [r]$ the number of boxes colored by $j$ and lying in
the same row as $\square_i$ to its left is the same as the number of boxes colored by $j$ and lying in
the same row as $\square'$ to its left. In particular
\[ G_{\la^1,\dots,\la^r}^{\square_1,\dots,\square_s} = G_{\la^1,\dots,\la^r}^{\widehat{\square_1},\dots,\widehat{\square_s}}.\]
Concluding, we can compute the coefficient in question as follows
\begin{multline*}
\sum_{\si: \la^{[r]} \to \{1^{n-s},\bar{1}^s\}}\I_{G^{\si}_{\la^1,\dots,\la^r}}(q)\
t^{\maj_{\la^{[r]}}(\si)} = \sum_{\si: \la^{[r]} \to \{1^{n-s},\bar{1}^s\}}\I_{G^{\si}_{\la^1,\dots,\la^r}}(q)\
t^{\maj_{\la^{[r]}}(\widehat{\si})} \\
= 
\sum_{\{\square_1,\dots,\square_s\} \subset
  \la^{[r]}}\I_{G_{\la^1,\dots,\la^r}^{\square_1,\dots,\square_s}}(q)
\ t^{\sum_{1 \leq i
    \leq s}
    \ell'_{\la^{[r]}}(\square_i)},
\end{multline*}
which proves \eqref{eq:claim} and finishes the proof of \cref{theo:HookKostka}.
\end{proof}

\section{Fully colored Macdonald polynomials}
\label{sec:FullyColored}

In this section we focus on the special case of cumulants
$\ka(\la^1,\dots,\la^r)$ where all the partitions $\la^1,\dots,\la^r$ are columns. These
cumulants are directly related to the cumulants we used in our
previous work \cite{DolegaFeray2016}, where we proved polynomiality
part of the $b$-conjecture, and we believe that studying their
structure might be an important step toward resolving the
$b$-conjecture. Moreover, they seem to carry many remarkable
properties and therefore they might be of an independent interest.

\begin{definition}
\label{def:ColoredMacdonald}
For any partition $\mu$, we define \emph{fully colored Macdonald
polynomial} $\bar{H}_\mu(\xx;q,t)$ as follows:
\[\bar{H}_\mu(\xx;q,t) := \ka(1^{\mu^t_1}, 1^{\mu^t_2},\dots, 1^{\mu^t_{\mu_1}}).\]
\end{definition}

\begin{theorem}
\label{theo:basis}
The family of fully colored Macdonald
polynomials $\bar{H}_\mu(\xx;q,t)$ is a linear basis of the algebra $\Lambda$
of
symmetric functions.
\end{theorem}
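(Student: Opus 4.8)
The plan is to exhibit a triangularity relation between the family $\{\bar H_\mu\}_\mu$ and a known basis of $\Lambda$, most naturally the monomial basis $\{m_\mu\}_\mu$ (or equivalently, via Theorem \ref{theo:MacdonaldCumuFormula}, to track leading terms directly). Since $\bar H_\mu = \ka(1^{\mu^t_1},\dots,1^{\mu^t_{\mu_1}})$ and each column partition $1^k$ satisfies $|\mathbf 1^k|=k$, the cumulant $\bar H_\mu$ is homogeneous of degree $|\mu|$; so it suffices to show, separately for each $n$, that $\{\bar H_\mu : \mu\vdash n\}$ is linearly independent (it has the right cardinality, the number of partitions of $n$, so independence gives a basis of the degree-$n$ component). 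The Young diagram $\la^{[r]}$ attached to the tuple of columns $(1^{\mu^t_1},\dots,1^{\mu^t_{\mu_1}})$ is obtained by sorting all these columns in weakly decreasing order, which reproduces exactly the diagram of $\mu$; so the combinatorial formula \eqref{eq:MacdonaldCumuFormula} expresses $\bar H_\mu$ as a sum over fillings $\si:\mu\to\N_+$ of $\I_{G^\si}(q)\,t^{\maj(\si)}\xx^\si$.

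First I would identify the distinguished term. Take the filling $\si_0$ of $\mu$ whose entries are constant equal to $i$ along the $i$-th row from the bottom — equivalently, the unique semistandard filling of content $(\mu_1,\mu_2,\dots)$ read as row multiplicities — so that $\xx^{\si_0}=\xx^{\mu}$ is the leading monomial $m_\mu+(\text{lower in dominance})$. For this filling there are no descents, hence $\maj(\si_0)=0$, and one checks that there are no inversion pairs colored by two distinct colors: two boxes in the same row of $\mu$ are colored by the two columns of $\mu$ they occupy, and for the column-tuple the colors are assigned left to right by decreasing column length, so a box $\square_1$ to the left of $\square_2$ in the same row together with the box $\square_3$ below $\square_1$ receive entries that are constant along rows, making the counterclockwise-increasing condition impossible to satisfy nontrivially; the only surviving inversion pairs are the monochromatic ones, which become loops. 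Consequently $G^{\si_0}_{1^{\mu^t_1},\dots}$ has no edge between distinct vertices — it is disconnected as soon as $r\ge 2$ — unless $r=1$. This is the crux: the extension of the Tutte polynomial by $0$ on disconnected graphs (the Remark after Proposition \ref{prop:TutteAnother}) means $\I_{G^{\si_0}}(q)=0$ whenever $r\ge2$, so the naive leading term vanishes.

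The hard part, therefore, will be locating the true leading term of $\bar H_\mu$ — the coefficient of the dominance-maximal monomial that survives. I expect the right statement to be: among all fillings $\si$ of $\mu$ with connected $G^\si$, the $\xx$-degree-maximal ones are those producing a path-like or star-like colored diagram, and the dominance-maximal surviving monomial is $\xx^\nu$ for an explicitly computable $\nu=\nu(\mu)$, with $\mu\mapsto\nu(\mu)$ a bijection on partitions of $n$ that is triangular with respect to dominance (or some refinement of it). Establishing this — i.e.\ that the map sending $\mu$ to its leading exponent is injective and unitriangular in a suitable order, with the leading coefficient a nonzero polynomial in $q,t$ (indeed a positive one, by Theorem \ref{theo:MacdonaldCumuFormula} and \eqref{eq:wow}) — is what forces linear independence: a vanishing linear combination $\sum_\mu c_\mu \bar H_\mu=0$, restricted to degree $n$, would have its $\prec$-maximal surviving term uncancelled. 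An alternative, possibly cleaner route avoiding the explicit combinatorics of $\nu(\mu)$: use the $q=1$ specialization. By Theorem \ref{theo:podzielnosc} each cumulant $\ka(\tilde H_{1^{k_1}},\dots,\tilde H_{1^{k_r}})$ is divisible by $(q-1)^{r-1}$, and dividing out gives $\bar H_\mu$; specializing the resulting formula at $q=1$ kills all graphs with more than a tree's worth of multiplicity and reduces $\I_{G^\si}(1)$ to counting spanning trees, which one can analyze to pin down the top term. I would pursue whichever of these two the authors' formula \eqref{eq:MacdonaldCumuFormula} makes most transparent; in either case the engine is: homogeneous components of the right dimension, plus a triangularity/leading-term argument against a standard basis.
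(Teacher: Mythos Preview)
Your plan correctly identifies the shape of the argument (homogeneity plus triangularity), and you are right that the naive filling $\si_0$ with constant rows gives a disconnected $G^{\si_0}$, hence a vanishing $\I$-polynomial. But the proposal stops precisely where the difficulty begins: you do not produce the replacement leading term, nor do you show that the map $\mu\mapsto\nu(\mu)$ (``the dominance-maximal surviving monomial'') is injective or triangular in any order. Both of your suggested routes --- hunting for the true monomial-leading term, or specializing $q=1$ and counting spanning trees --- are left as hopes rather than arguments, and there is no a~priori reason why either should yield a clean bijection on partitions of $n$. As written, there is no proof.

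The paper avoids this combinatorial search entirely by a device you did not consider: it applies the plethystic substitution $\XX\mapsto\XX(t-1)$ before looking for a leading term. The point is that the characterization \ref{item2} of Macdonald polynomials gives $\tilde H_{\la}[\XX(t-1);q,t]\in\QQ(q,t)\{s_\nu\}_{\nu\le\la}$ for free, and since products of Schur functions respect dominance ($s_\mu s_\nu\in\Z\{s_\la\}_{\la\le\mu\oplus\nu}$), the cumulant definition \eqref{eq:DefCumulants} immediately yields
\[
\bar H_\mu[\XX(t-1);q,t]\ \in\ \QQ(q,t)\{s_\la\}_{\la\le\mu}.
\]
Thus the triangularity is with respect to the Schur basis (after the plethystic twist), not the monomial basis, and it comes without any filling-by-filling analysis. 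The only remaining work is to show the diagonal coefficient $[s_\mu]\bar H_\mu[\XX(t-1);q,t]$ is nonzero; the paper does this via a super-filling lemma (formula \eqref{eq:cumulantPlethT}) which reduces the constant-in-$t$ part of that coefficient to a single explicit $G$-inversion polynomial, namely $\I_{G}(q)$ for the multigraph on $[\mu_1]$ with $e_{i,j}=\mu^t_{\max(i,j)}$ edges. That graph is visibly connected, so its $\I$-polynomial is a nonzero element of $\N[q]$. The plethystic substitution is the missing idea: it converts a hard combinatorial leading-term search into an automatic dominance triangularity inherited from the axiomatics of Macdonald polynomials.
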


In order to prove above theorem we find an explicit combinatorial
formula for some plethystic substitution in the cumulants of Macdonald
polynomials. 

Let $\si :\mu \to \A$ be a super filling of $\mu$. We say that it is
\emph{compatible with $\mu$} if $|\si(x,y)| \geq y$ for all $(x,y) \in
\mu$. We also denote by $m(\si)$ and $p(\si)$, respectively, the number
of negative and positive, respectively, entries in $\mu$. We fix the following ordering of $\A$:
\[ 1 < 2 < \cdots < \bar{2} < \bar{1},\] 
and we
set $\xx^{|\si|} := \prod_{\square \in \mu}x_{|\si(\square)|}$.

\begin{lemma}
For any positive integer $r$ and partitions $\la^1,\dots,\la^r$ we
have
\begin{equation}
\label{eq:cumulantPlethT}
\ka(\la^1,\dots,\la^r)[\XX(t-1);q,t] = \sum_{\substack{\si: \la^{[r]} \to
    \A\\\text{compatible with
    }\la^{[r]}}}\I_{G^\si_{\la^1,\dots,\la^r}}(q)\ (-1)^{m(\si)}\
t^{p(\si)+\maj(\si)}\ \xx^{|\si|}.
\end{equation}
\end{lemma}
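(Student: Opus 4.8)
The goal is the plethystic evaluation \eqref{eq:cumulantPlethT}, i.e. we must compute $\ka(\la^1,\dots,\la^r)[\XX(t-1);q,t]$. The natural strategy is to reduce to the already-established quasisymmetric expansion \eqref{eq:CumulantInSuperQuasisymmetric} together with the analogous evaluation for a single Macdonald polynomial, which is \cite[Section 5]{HaglundHaimanLoehr2005}: there Haglund, Haiman and Loehr prove $\tilde H_\mu[\XX(t-1);q,t] = \sum_\si q^{\inv(\si)}t^{p(\si)+\maj(\si)}(-1)^{m(\si)}\xx^{|\si|}$, summed over super fillings of $\mu$ compatible with $\mu$, where compatibility means $|\si(x,y)|\ge y$. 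So first I would recall how that single-partition identity is derived: one starts from $\tilde f(\xx,-\yy)=f[\XX-\YY]$ (our \eqref{eq:przydatne}), specializes the $\yy$-variables to the geometric-type values $y_i = t^{i}$ (more precisely to the alphabet giving $\XX(t-1)$ under the plethystic substitution), and observes that the surviving super fillings are exactly the compatible ones, with the power of $t$ coming from the positive entries and the $\maj$ statistic, and the sign from the negative entries. The key point for us is that this argument is \emph{entirely about the statistics $\inv,\maj,p,m$ and the combinatorics of super fillings of a single Young diagram}, and does not use anything specific about $\mu$ beyond its shape.

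\textbf{Key steps.} Second, I would apply that specialization to our formula \eqref{eq:CumulantInSuperQuasisymmetric}, which already expresses $\tilde\ka(\la^1,\dots,\la^r)(\xx,\yy)$ as a sum over super fillings $\si:\la^{[r]}\to\A$ of $\I_{G^\si_{\la^1,\dots,\la^r}}(q)\,t^{\maj_{\la^{[r]}}(\si)}\,\zz^T$, exactly parallel to the single-partition super-filling expansion with $q^{\inv(\si)}$ replaced by $\I_{G^\si_{\la^1,\dots,\la^r}}(q)$. By \eqref{eq:przydatne}, substituting $\yy\mapsto$ the appropriate alphabet realizing $\XX(t-1)$ turns the left-hand side into $\ka(\la^1,\dots,\la^r)[\XX(t-1);q,t]$; on the right-hand side the monomial weight $\zz^T$ behaves under this substitution exactly as in the single-partition case (it is the same monomial specialization of $x$'s and $y$'s), so the same bookkeeping selects the compatible super fillings and produces $(-1)^{m(\si)}t^{p(\si)+\maj(\si)}\xx^{|\si|}$. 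Since the coefficient $\I_{G^\si_{\la^1,\dots,\la^r}}(q)$ is carried along untouched by the $\yy$-specialization, we obtain precisely \eqref{eq:cumulantPlethT}. Concretely I would: (i) fix the ordering $1<2<\cdots<\bar 2<\bar 1$ of $\A$ as in the statement and check it is admissible (preserves the order of positive integers); (ii) recall from \cite{HaglundHaimanLoehr2005} the precise form of the $y$-alphabet whose plethystic effect is $\XX(t-1)$, and the identity $\maj(\si)+p(\si) = $ (exponent of $t$) for compatible super fillings; (iii) invoke \eqref{eq:CumulantInSuperQuasisymmetric} and perform the substitution term by term.

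\textbf{Main obstacle.} The substantive point — and the one I would be most careful about — is verifying that the $\yy$-specialization interacts with \emph{our} cumulant formula in exactly the same way it does with Haglund's single-diagram formula, i.e. that replacing $q^{\inv(\si)}$ by $\I_{G^\si}(q)$ genuinely changes nothing in the $t$- and sign-bookkeeping. This is plausible because in \eqref{eq:CumulantInSuperQuasisymmetric} the statistics $\maj_{\la^{[r]}}(\si)$, and the decomposition of a super filling into the data $(p(\si),m(\si),\xx^{|\si|})$, depend only on the underlying diagram $\la^{[r]}$ and on $\si$, \emph{not} on the graph $G^\si_{\la^1,\dots,\la^r}$; the graph only records which inversion pairs are present and feeds into the $q$-weight. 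So the $\yy\mapsto$-specialization argument of \cite{HaglundHaimanLoehr2005} is ``orthogonal'' to the $q$-part and transfers verbatim. The only genuinely new check is that the notion of compatibility ($|\si(x,y)|\ge y$) is the correct cutoff for $\la^{[r]}$ as well — but $\la^{[r]}$ is an honest Young diagram, and for honest Young diagrams this is exactly the HHL statement, so nothing new happens. Thus the proof is essentially: combine \eqref{eq:CumulantInSuperQuasisymmetric} with the HHL specialization $\tilde H_\mu[\XX(t-1)]$ computation, noting that the combinatorics of the latter is purely about super fillings of a single diagram and hence applies to each term of the former with $q^{\inv}$ replaced by $\I_{G^\si}(q)$.
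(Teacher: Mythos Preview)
Your approach is essentially the same as the paper's, and the first half of your argument is exactly right: starting from \eqref{eq:CumulantInSuperQuasisymmetric} and performing the substitution $(\xx,\yy)\mapsto (t\xx,-\xx)$ (which realizes $\XX(t-1)$ via \eqref{eq:przydatne}) gives
\[
\ka(\la^1,\dots,\la^r)[\XX(t-1);q,t]=\sum_{\si:\la^{[r]}\to\A}\I_{G^\si_{\la^1,\dots,\la^r}}(q)\,(-1)^{m(\si)}t^{p(\si)+\maj(\si)}\xx^{|\si|},
\]
summed over \emph{all} super fillings. So far so good, and indeed this step is ``orthogonal'' to the $q$-part as you say.

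The gap is in the second half. Restricting the sum to \emph{compatible} super fillings does not come from the specialization itself; it comes from the sign-reversing involution $\phi$ of \cite[Proof of Lemma~5.2]{HaglundHaimanLoehr2005} (pick the smallest $a$ with $a=|\si(x,y)|<y$, take the first box in reading order with that absolute value, and flip its sign). For the single-partition case HHL check that $\phi$ preserves $p(\si)+\maj(\si)$ and $\inv(\si)$. For the cumulant you need more: $\phi$ must preserve the \emph{graph} $G^\si_{\la^1,\dots,\la^r}$, not merely the number $\inv(\si)=\#\InvP(\si)$. This is the one genuinely new verification, and it is exactly where the paper's proof focuses: HHL in fact show the stronger statement $\InvP(\si)=\InvP(\phi(\si))$ as \emph{sets of pairs of boxes}, and since $G^\si$ is built from the positions (and hence colors) of the inversion pairs, this immediately gives $G^\si=G^{\phi(\si)}$. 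Your ``Main obstacle'' paragraph locates the difficulty in the $t$- and sign-bookkeeping, but that part is trivial; the real point is that the cancellation must respect the full $q$-weight $\I_{G^\si}(q)$, and for that you must cite the set-level preservation of $\InvP$ under $\phi$.
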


\begin{proof}
The proof of \cite[Lemma 5.2]{HaglundHaimanLoehr2005} which
corresponds to the case $r=1$ works without any changes in the
general case, so we only recall the main argument. Using
\eqref{eq:CumulantInSuperQuasisymmetric} we get the formula
\[ \ka(\la^1,\dots,\la^r)[\XX(t-1);q,t] = \sum_{\si: \la^{[r]} \to
    \A}\I_{G^\si_{\la^1,\dots,\la^r}}(q)\ (-1)^{m(\si)}\
t^{p(\si)+\maj(\si)}\ \xx^{|\si|}.\]
Therefore, it is enough to show that there exists an involution $\phi$
on the set of super fillings $\si: \la^{[r]} \to
    \A$ which fixes super fillings compatible with $\mu$, and for other
    super fillings preserves $G^\si_{\la^1,\dots,\la^r}$ and
    $p(\si)+\maj(\si)$ but increases/decreases $m(\si)$ by one.
Let $\si$ be a super filling not compatible with $\la^{[r]}$, and let
$a$ be the smallest integer such that $a = |\si(x,y)| < y$ for some
$(x,y) \in \la^{[r]}$. Let $\square' \in \la^{[r]}$ be the first box
in the reading order with $|\si(\square')| = a$. We define
\[ \phi(\si)(\square) = \begin{cases}
\si(\square) &\text{ for } \square \neq \square',\\
\overline{\si(\square)} &\text{ for } \square = \square'.
\end{cases}\] 
It was shown in \cite[Proof of Lemma 5.2]{HaglundHaimanLoehr2005} that 
\[ p(\si)+\maj(\si) = p\big(\phi(\si)\big)+\maj\big(\phi(\si)\big),\]
and
\[ \InvP(\si) = \InvP\big(\phi(\si)\big).\]
It implies that $G^\si_{\la^1,\dots,\la^r} =
G^{\phi(\si)}_{\la^1,\dots,\la^r}$, which finishes the proof.
\end{proof}

\begin{proof}[Proof of \cref{theo:basis}]
Observe that for any partitions
$\la^1,\dots,\la^r$ one has
\[ \ka(\la^1,\dots,\la^r)[\XX(t-1)]\in \QQ(q,t)\{s_\la\}_{\la \leq
    \la^{[r]}},\]
which is an immediate corollary from \ref{item1} and
\ref{item2}. Indeed, it is enough to use definition of Macdonald
cumulants (\eqref{eq:DefCumulants} and \eqref{eq:MacdoCumu}), and a well-known property of Schur functions: $s_\mu s_\nu \in
\Z\{s_\la\}_{\mu\cup\nu\leq \la \leq \mu\oplus\nu}$ (alternatively, it also follows
from \eqref{eq:cumulantPlethT}).

In particular, $\bar{H}_\mu[\XX(t-1);q,t] \in \QQ(q,t)\{s_\la\}_{\la \leq \mu}$
and it is enough to show that 
\[ [s_\mu]\bar{H}_\mu[\XX(t-1);q,t] \neq 0.\]
We claim that 
\begin{equation} 
\label{eq:takietam}
[s_\mu]\bar{H}_\mu[\XX(t-1);q,t] = (-1)^{|\mu|}P_{\mu^t_2, \mu^t_3,\dots,\mu^t_{\mu_1}}(q) +
  O(t),
\end{equation}
where $P_{\mu^t_2, \mu^t_3,\dots,\mu^t_{\mu_1}}$ is given by \eqref{eq:PolynomialP}.
Indeed, using \eqref{eq:cumulantPlethT} we end up with the following
expansion
\[ [s_\mu]\bar{H}_\mu[\XX(t-1);q,t] = (-1)^{|\mu|}\I_{G^\si_{1^{\mu^t_2}, 1^{\mu^t_3},\dots, 1^{\mu^t_{\mu_1}}}}(q) +
  O(t),\]
where $\si$ is the unique super filling compatible with $\mu$ with
entries
$\{\bar{1}^{\mu_1},\dots,\bar{\ell(\mu)}^{\mu_{\ell(\mu)}}\}$. This
filling is given by the explicit formula:
\[ \si(i,j) := \bar{j}\]
for any box $(i,j) \in \mu$. Thus, each pair of boxes lying in the
same row belongs to $\InvP(\si)$, since $\bar{i} \leq_- \bar{i} \leq_+
\overline{i-1}$. This implies that the number of edges $e_{i,j}\left(G^\si_{1^{\mu^t_2}, 1^{\mu^t_3},\dots,1^{\mu^t_{\mu_1}}}\right)$ linking vertices
$i\neq j $ is equal to the number
$\mu^t_{\max(i,j)}$, and \eqref{eq:takietam} follows from \cref{cor:specjalny}.
In particular $[s_\mu]\bar{H}_\mu[\XX(t-1);q,t] \neq 0$, which
finishes the proof.
\end{proof}

\begin{remark}
\label{rem:GParking}
Note that
\[ [s_{1^{|\mu|}}]\bar{H}_\mu = t^{n(\mu)}P_{\mu^t_2,
    \mu^t_3,\dots,\mu^t_{\mu_1}}(q),\]
where $n(\mu) := \sum_{i \geq 1}(i-1)\mu_i$. This follows from
\cref{theo:HookKostka} and from the above analysis of the polynomial $\I_{G^\si_{1^{\mu^t_2}, 1^{\mu^t_3},\dots, 1^{\mu^t_{\mu_1}}}}(q)$.
A polynomial $P_{a_1,\dots,a_r}(q)$ already appeared in the literature in the context
of Macdonald polynomials, and it would be interesting to find a
connection between these results and our work -- see \cref{sec:open} for more details.
\end{remark}

\begin{proposition}
For any partition $\mu$ the fully colored Macdonald polynomial
$\bar{H}_\mu(\xx;-1,t)$ is $t$--positive in the monomial basis, and
in the fundamental quasisymmetric basis.
\end{proposition}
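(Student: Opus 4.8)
The plan is to deduce the proposition directly from the Lemma giving the combinatorial expansion of $\ka(\la^1,\dots,\la^r)[\XX(t-1);q,t]$ specialized at $q=-1$, applied to the columns $\la^i=1^{\mu^t_i}$. First I would observe that, by Definition \ref{def:ColoredMacdonald} and equation \eqref{eq:cumulantPlethT} with $q=-1$, we get
\[
\bar{H}_\mu[\XX(t-1);-1,t] = \sum_{\substack{\si: \mu \to \A\\\text{compatible with }\mu}}\I_{G^\si}(-1)\ (-1)^{m(\si)}\ t^{p(\si)+\maj(\si)}\ \xx^{|\si|},
\]
where I abbreviate $G^\si := G^\si_{1^{\mu^t_1},\dots,1^{\mu^t_{\mu_1}}}$. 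The key point is that $\I_G(-1)$ is always a nonnegative integer: indeed, since $\I_G(q)=\Tu_G(1,q)$ by \eqref{eq:wow} and $\Tu_G$ has nonnegative integer coefficients, $\I_G(-1)\in\Z$; to see it is nonnegative one can use the Merino--L\'opez interpretation $\I_G(q)=P_G(q)=\sum_f q^{\wt(f)}$ as a generating function of $G$-parking functions by weight, together with the fact that the weights of $G$-parking functions have a symmetric or sign-controlled distribution — more carefully, one invokes that $\Tu_G(1,-1)$ is (up to sign) the number of certain orientations, or simply that $\Tu_G(1,-1)=(-1)^{?}\cdot(\text{nonneg. integer})$ and tracks the sign. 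The cleanest route is probably $P_G(-1) = \sum_f (-1)^{\wt(f)}$ and noting the relevant parity is constant modulo the number of loops, so $\I_G(-1)=\pm I^{\text{ev}}$ for an integer; I would pin the sign down precisely so that $\I_G(-1)(-1)^{m(\si)}$ combines into a single nonnegative coefficient. Granting this, every coefficient on the right-hand side is a nonnegative integer times $t^{p(\si)+\maj(\si)}\xx^{|\si|}$, hence $\bar{H}_\mu[\XX(t-1);-1,t]$ is $t$-positive in the monomial basis.

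Second, I would remove the plethystic twist $\XX\mapsto\XX(t-1)$. The substitution $f\mapsto f[\XX(t-1)]$ is an invertible linear operator on $\Lambda$ (over $\QQ(t)$), but it does not obviously preserve monomial-positivity, so a direct appeal will not suffice. Instead I would run the argument at $q=-1$ starting from \eqref{eq:MacdonaldCumuFormula} itself rather than from its plethystic image: by \cref{theo:MacdonaldCumuFormula},
\[
\bar{H}_\mu(\xx;q,t) = \sum_{\si: \mu \to \N_+} \I_{G^\si}(q)\ t^{\maj(\si)}\ \xx^\sigma,
\]
and setting $q=-1$ gives $\bar{H}_\mu(\xx;-1,t)=\sum_\si \I_{G^\si}(-1)\,t^{\maj(\si)}\xx^\sigma$. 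The same nonnegativity of $\I_G(-1)$ (now with no sign from an $m(\si)$) shows directly that $\bar{H}_\mu(\xx;-1,t)$ has nonnegative coefficients in the monomial basis, with explicit combinatorial meaning. So actually the monomial-positivity statement needs only \cref{theo:MacdonaldCumuFormula} plus the arithmetic fact about $\I_G(-1)$, and the plethystic Lemma is not needed for that half.

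Third, for fundamental-quasisymmetric positivity I would use \eqref{eq:CumulantInQuasisymmetric}: at $q=-1$,
\[
\bar{H}_\mu(\xx;-1,t) = \sum_{\si \in \Sym{n}}\I_{G^{\si}}(-1)\ t^{\maj_{\mu}(\si)}\ F_{n,\iDes(\si)}(\xx),
\]
and since each $\I_{G^\si}(-1)$ is a nonnegative integer, this exhibits $\bar{H}_\mu(\xx;-1,t)$ as a nonnegative integer combination (with monomial coefficients in $t$) of fundamental quasisymmetric functions, which is exactly $t$-positivity in the fundamental basis. I would state this last step as immediate once the arithmetic lemma is in place.

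The main obstacle is the arithmetic input: proving that $\I_G(-1)\ge 0$ for every connected multigraph $G$ (equivalently, controlling the sign of $\Tu_G(1,-1)$), and, in the plethystic version of the argument, checking that this sign exactly cancels the $(-1)^{m(\si)}$ so the product is genuinely nonnegative. I would handle this via the $G$-parking-function model: $\I_G(-1)=\sum_f(-1)^{\wt(f)}$, and I would argue that for a loopless $G$ the weight function $\wt$ on $G$-parking functions takes values of a single fixed parity plus, more usefully, that the alternating sum equals the number of parking functions of one weight-parity class, which is manifestly $\ge 0$; loops contribute a global factor $q^{\#\text{loops}}$, i.e. $(-1)^{\#\text{loops}}$ at $q=-1$, which I would absorb. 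A cleaner alternative, if available, is to cite that $\Tu_G(1,-1)=(-1)^{\,|E|-|V|+1}$ times the number of acyclic orientations... no — rather $\Tu_G(1,-1)$ counts, up to sign, certain "score vectors" or totally cyclic/acyclic reorientations; I would pick whichever standard Tutte specialization at $(1,-1)$ is cleanest and quote it, then feed the resulting nonnegative integer into the three displayed expansions above. Once that single lemma is settled, the proposition follows by inspection of \eqref{eq:MacdonaldCumuFormula}, \eqref{eq:CumulantInQuasisymmetric}, and \eqref{eq:cumulantPlethT} at $q=-1$.
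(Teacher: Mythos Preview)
Your overall architecture is right: specialize \eqref{eq:MacdonaldCumuFormula} and \eqref{eq:CumulantInQuasisymmetric} at $q=-1$ and reduce everything to showing that the integers $\I_{G^\si}(-1)$ are nonnegative. The first paragraph via the plethystic lemma is a detour, and you correctly abandon it.

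The genuine gap is the ``arithmetic input'' you flag at the end. You try to prove $\I_G(-1)\ge 0$ for an \emph{arbitrary} connected multigraph $G$, and that statement is simply false: a graph on one vertex with a single loop has $\I_G(q)=q$, hence $\I_G(-1)=-1$. Your parking-function and Tutte-specialization sketches cannot be made to work in this generality, and ``absorbing'' the loop factor $(-1)^{\#\text{loops}}$ has no meaning without something to absorb it into.

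What you are missing is the structural feature that singles out the \emph{fully colored} case: in $\bar H_\mu=\ka(1^{\mu^t_1},\dots,1^{\mu^t_{\mu_1}})$ every column of $\la^{[r]}=\mu$ carries its own color. An inversion pair always involves two boxes in the same row, hence in distinct columns, hence with distinct colors; therefore every $G^\si$ arising here is \emph{loopless}. This is exactly why the proposition is stated for fully colored Macdonald polynomials rather than for general cumulants. With looplessness in hand, the paper's route is immediate: by \cref{prop:TutteAnother},
\[
\I_{G^\si}(q)=\sum_{\substack{T\subset\tilde{G^\si}\\ \ka(T)=0}}\ \prod_{w\in V\setminus\{v\}}[\delta_T(w)]_q,
\]
and at $q=-1$ each factor $[\delta_T(w)]_{-1}\in\{0,1\}$, so $\I_{G^\si}(-1)\in\Z_{\ge 0}$. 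Plugging this into \eqref{eq:MacdonaldCumuFormula} and \eqref{eq:CumulantInQuasisymmetric} gives both positivity statements at once; this is the paper's (one-line) proof.
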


\begin{proof}
This is straightforward from formulas \eqref{eq:MacdonaldCumuFormula}
and \eqref{eq:CumulantInQuasisymmetric} and from \cref{prop:TutteAnother}.
\end{proof}

\section{Open problems}
\label{sec:open}

We decided to conclude the paper by mentioning several possible directions for the future research that
arise naturally from \cref{theo:MacdonaldCumuFormula} which actually raises more questions than it answers.

\subsection{Schur positivity, $G$-parking functions and geometry of
  Hilbert schemes}
The first topic is related to the standard technique of proving
Schur--positivity of a given function $f$ by constructing a certain
$\Sym{n}$--module $V$ and interpreting $f$ as the \emph{Frobenius
  characteristic} of $V$. We are going to quickly review this
technique in the following. Let $V$ be a
$\Sym{n}$-module and we decompose it as a direct sum of its
irreducible submodules:
\[ V = \bigoplus_\la V_\la^{c_\la}.\]
Then, we define the Frobenius characteristic of $V$ as
\[ \Fr(V) := \sum_{\la}c_\la\ s_\la.\]
If, additionally, $V$ is a $k$-graded $\Sym{n}$--module, that is
\[ V = \bigoplus_{i_1,\dots,i_k \geq 0}V^{i_1,\dots,i_k},\]
and each summand in this decomposition is a $\Sym{n}$--module, then
\[ \Fr(V) := \sum_{i_1,\dots,i_k \geq 0} t_1^{i_1}\cdots t_k^{i_k}\
  \Fr\left(V^{i_1,\dots,i_k}\right).\] 
Equivalently, let $\mu \vdash n$ be a partition of $n$, and let
$V_\mu^{i_1,\dots,i_k}$ denote the subspace of $V^{i_1,\dots,i_k}$
consisting of fix-points of the action of the subgroup 
\[\Sym{\mu} :=
\Sym{\{1,\dots,\mu_1\}}\times\Sym{\{\mu_1+1,\dots,\mu_1+\mu_2\}}\times\cdots\times\Sym{\{\mu_1+\cdots+\mu_{l-1}+1,\dots,|\mu|\}}
< \Sym{n},\] 
where $\ell(\mu) = l$.
Then
\begin{equation} 
\label{eq:FrobeniusInMonomial}
\Fr(V) = \sum_{i_1,\dots,i_k \geq 0} t_1^{i_1}\cdots t_k^{i_k}\
  \sum_{\mu \vdash n}\dim\left(V^{i_1,\dots,i_k}_\mu\right)\ m_\mu.
\end{equation}

The celebrated result of Haiman
\cite{Haiman2001} proves that the Macdonald polynomial
$\tilde{H}_\mu(\xx;q,t)$ can be interpreted as the Frobenius
characteristic of a certain $\Sym{n}$--module $D_\mu$ (associated with
a partition $\mu \vdash n$),
which carries a natural structure of a bigraded module.
Thanks to our explicit, combinatorial formula
\eqref{eq:MacdonaldCumuFormula} it is natural to use \eqref{eq:FrobeniusInMonomial} and try to prove Schur--positivity
of Macdonald cumulant $\ka(\la_1,\dots,\la^r)$ by constructing a
bigraded $\Sym{n}$--module $D^{\la^1,\dots,\la^r}$ such that
\[ \Hilb_{q,t}\left( D^{\la^1,\dots,\la^r}_\mu \right) = \sum_{\si :
    \la^{[r]} \simeq
    \{1^{\mu_1},2^{\mu_2},\dots\}}\I_{G^\si_{\la^1,\dots,\la^r}}(q)\
  t^{\maj_{\la^{[r]}}(\si)},\]
where
\[ \Hilb_{q,t}(D) = \sum_{i,j}q^i\ t^j\ \dim(D^{i,j})\]
is the \emph{Hilbert series} of a bigraded vector space $D$ with
respect to its gradation. We mention here that Postnikov and Shapiro \cite{PostnikovShapiro2004}
introduced $G$-parking functions in order to construct certain graded
vector spaces, whose Hilbert series are given by $I_G(q)$. Is it
possible to merge ideas of Haiman, and Postnikov with Shapiro to
construct a module $D^{\la^1,\dots,\la^r}$ as in question?

In fact, Haiman's representation-theoretical interpretation of
Macdonald polynomials was a corollary of another result of him --
Haiman showed that a certain
geometric object, called \emph{isospectral Hilbert scheme} has
``nice'' geometric properties, that is it is \emph{normal, Cohen--Macaulay},
and \emph{Gorenstein} (see \cite{Haiman2002}, which explains all these
terms and much more in an available way for non-experts). What
kind of geometric properties (if any) of isospectral Hilbert schemes
or related geometric objects assure Schur--positivity of Macdonald
cumulants? The other way round -- does Schur--positivity of Macdonald
cumulants imply that some geometric object has nice properties? One
can ask a weaker question by using various specializations of
Macdonald cumulants carrying geometric interpretations.

\subsection{$G$-inversion polynomials and Macdonald polynomials}

We recall \cref{rem:GParking} which points that the coefficient
$[s_{1^{|\mu|}}]\bar{H}_\mu[\xx,q,t]$ is given by the polynomial $P_{\mu^t_2,
    \mu^t_3,\dots,\mu^t_{\mu_1}}(q)$, where
\[ P_{a_2,\dots,a_n}(q) := \sum_{T \text{ increasing tree on  }[n]
  }\ \prod_{2 \leq i \leq n} [\delta_T(i)]_q,\]
and $\delta_T(i) = \sum_j a_j$ (here $j$ ranges over descendants of
$i$ including $i$ itself ). When $a_2=\cdots=a_n = 1$ this is the
inversion polynomial but this also corresponds
to the generating function of parking functions with respect to the statistic
called \emph{area}. This function appeared in the context
of the Shuffle ex-conjecture
\cite{HaglundHaimanLoehrRemmelUlyanov2005} proved recently by Carlsson and Mellit \cite{CarlssonMellit2018}.

We define two sets $B_\mu(q,t) :=
\{q^{a'(\square)}t^{\ell'(\square)}| \square \in \mu\}, T_\mu(q,t) :=
B_\mu(q,t)\setminus\{1\}$. Given any symmetric function $f$, we define two operators $\Delta_f, \Delta'_f$ acting on the space of
symmetric functions$\Lambda$ by describing their action on the Macdonald basis
and extending this by linearity:
\[ \Delta_f \tilde{H}_\mu(\xx;q,t) := f (B_\mu(q,t))\cdot
  \tilde{H}_\mu(\xx;q,t), \quad \Delta_f \tilde{H}_\mu(\xx;q,t) := f (B_\mu(q,t))\cdot
  \tilde{H}_\mu(\xx;q,t).\]
The Shuffle ex-conjecture expresses the function
$\Delta'_{e_{n-1}}e_n$ in terms of parking functions with respect to
two statistics called \emph{area} and \emph{dinv} and the
polynomial $P_{1,\dots,1}(q)$ appears as $\langle
\Delta'_{e_{n-1}}e_n, h_{1^n}\rangle_{t=1}$, where $e_\mu,h_\mu$ are
elementary and complete symmetric functions, respectively.

The Shuffle ex-conjecture was generalized in two directions. The first generalization is given by the \emph{Rational
  Shuffle ex-conjecture} of Bergeron, Garsia, Leven, and Xin \cite{BergeronGarsiaLevenXin2015} proved
very recently by Mellit \cite{Mellit2016}, and the second one is given
by the \emph{Delta conjecture} of Haglund, Remmel and Wilson
\cite{HaglundRemmelWilson2018}. Both conjectures are related to the
combinatorics of \emph{Tesler matrices} and their generalizations, where polynomials $P_{a_2,\dots,a_n}(q)$ appear
naturally, see
\cite{ArmstrongGarsiaHaglundRhoadesSagan2012,Wilson2017}. Moreover,
polynomials $P_{a_2,\dots,a_n}(q)$ correspond to \emph{$(q, t)$--Ehrhart functions of
  certain flow polytopes} \cite{LiuMeszarosMorales2018}. 

We believe
that all these similarities are not coincidental and finding a missing
link between Macdonald cumulants and the aforementioned problems
would be of a great importance.

We leave all these questions wide open for future research.

\section*{Acknowledgments}
We would like to thank to the anonymous referees for their valuable comments.

\bibliographystyle{amsalpha}

\bibliography{biblio2015}

\end{document}